\setlist[itemize]{leftmargin=25pt}
\setlist[enumerate]{leftmargin=25pt}
\newtheorem{theorem}{Theorem}[section]
\newtheorem{lemma}[theorem]{Lemma}
\newtheorem{prop}[theorem]{Proposition}
\newtheorem{cor}[theorem]{Corollary}
\theoremstyle{definition}
\newtheorem{definition}[theorem]{Definition}
\newtheorem{que}[theorem]{Question}
\theoremstyle{remark}
\newtheorem{remark}[theorem]{Remark}
\newtheorem{example}[theorem]{Example}
\numberwithin{equation}{section}
\DeclareMathOperator*{\esssup}{ess\,sup}
\DeclareMathOperator*{\essinf}{ess\,inf}
\DeclareMathOperator{\loc}{loc}
\DeclareMathOperator{\BMO}{BMO}
\newcommand{\N}{\ensuremath{\mathbb{N}}}
\newcommand{\R}{\ensuremath{\mathbb{R}}}
\newcommand{\mc}{\mathcal}
\DeclarePairedDelimiter\abs{\lvert}{\rvert}
\DeclarePairedDelimiter\cbrace\{\}
\DeclarePairedDelimiter\ha()
\DeclarePairedDelimiter{\ip}\langle\rangle
\DeclarePairedDelimiter{\nrm}\lVert\rVert
\newcommand{\nrmb}[1]{\bigl\|#1\bigr\|}
\newcommand{\absb}[1]{\bigl|#1\bigr|}
\newcommand{\hab}[1]{\bigl(#1\bigr)}
\newcommand{\nrms}[1]{\Bigl\|#1\Bigr\|}
\newcommand{\has}[1]{\Bigl(#1\Bigr)}
\newcommand{\cbraces}[1]{\Bigl\{#1\Bigr\}}
\newcommand{\dd}{\hspace{2pt}\mathrm{d}}
\newcommand{\ddn}{\mathrm{d}}
\newcommand{\ee}{\mathrm{e}}
\newcommand{\A}[1]{\mc{A}_{#1}}
\let \la=\lambda
\let \e=\varepsilon
\let \d=\delta
\let \a=\alpha
\let \f=\varphi
\let \O=\Omega
\let \ga=\gamma
\begin{document}
\title[BMO with respect to Banach function spaces]
{BMO with respect to Banach function spaces}

\author[A.K. Lerner]{Andrei K. Lerner}
\address[A.K. Lerner]{Department of Mathematics,
Bar-Ilan University, 5290002 Ramat Gan, Israel}
\email{lernera@math.biu.ac.il}

\author[E. Lorist]{Emiel Lorist}
\address[E. Lorist]{Department of Mathematics and Statistics\\ University of Helsinki \\ P.O. Box 68\\
FI-00014 Helsinki\\ Finland}
\email{emiellorist@gmail.com}

\author[S. Ombrosi]{Sheldy Ombrosi}
\address[S. Ombrosi]{Departamento de Matem\'atica\\
Universidad Nacional del Sur\\
Bah\'ia Blanca, 8000, Argentina}\email{sombrosi@uns.edu.ar}

\thanks{The first author was supported by ISF grant no. 1035/21. The second author was supported by the Academy of Finland through grant no. 336323. The third author was partially supported
by ANPCyT PICT 2018-2501.}

\begin{abstract}
For every cube $Q \subset \R^n$ we let $X_Q$ be a quasi-Banach function space over $Q$ such that $\nrm{\chi_Q}_{X_Q} \simeq 1$, and for $X= \cbrace{X_Q}$ define
\begin{align*}
\nrm{f}_{\BMO_X} &:=\sup_Q \,\nrm{f-{\textstyle\frac{1}{|Q|}\int_Qf} }_{X_Q},\\
\nrm{f}_{\BMO_X^*} &:=\sup_Q \,\inf_c\, \nrm{f-c}_{X_Q}.
\end{align*}
We study necessary and sufficient conditions on $X$ such that
$$
\BMO = \BMO_X = \BMO_{X}^*.
$$
In particular, we give a full characterization of the embedding $\BMO \hookrightarrow \BMO_X$ in terms of so-called sparse collections of cubes and we give easily checkable and rather weak sufficient conditions for the embedding $\BMO_X^* \hookrightarrow \BMO$. Our main theorems recover and improve all previously known results in this area.
\end{abstract}

\keywords{Bounded mean oscillation, Banach function space, sparse family.}

\subjclass[2020]{42B20, 42B25, 42B35, 46E30}


\maketitle

\section{Introduction}
The space of functions of bounded mean oscillation, abbreviated BMO, was introduced by John--Nirenberg \cite{JN61}. Given a locally integrable function $f\colon \R^n \to \R$, we say that $f\in \BMO$ if
$$\|f\|_{\BMO}:=\sup_{Q}\frac{1}{|Q|}\int_Q|f(x)-\langle f\rangle_Q|\dd x <\infty,$$
 where the supremum is taken over all cubes $Q\subset {\mathbb R}^n$ and we define $\langle f\rangle_Q:=\frac{1}{|Q|}\int_Qf$.
An important and frequently used fact about $\BMO$ is that it is self-improving in the sense that for every $p\in (0,\infty)$,
\begin{equation}\label{si}
\|f\|_{\BMO}\simeq\sup_{Q}\has{\frac{1}{|Q|}\int_Q|f(x)-\langle f\rangle_Q|^p \dd x}^{1/p}.
\end{equation}
This is a consequence of the John--Nirenberg inequality \cite{JN61} and the  John--Str\"omberg theorem \cite{J65, S79}.

The goal of this paper is to generalize (\ref{si}) by replacing the normalized $L^p$-norm on the right-hand side by a more general function space norm and characterize those spaces for which such an extension is possible.

The following approach to this problem has been considered in, e.g., \cite{H09, H12, I16, INS19}. Suppose that $X$ is a (quasi-)Banach function space over ${\mathbb R}^n$.
Then one can define $\BMO_X$ as the space of all locally integrable $f\colon \R^n \to \R$ such that
\begin{equation}\label{defbmox}
\|f\|_{\BMO_X}:=\sup_Q\frac{\|(f-\langle f\rangle_Q)\chi_Q\|_{X}}{\|\chi_Q\|_X}<\infty.
\end{equation}
Using this notation, \eqref{si} reads as $\|f\|_{\BMO}\simeq \|f\|_{\BMO_{L^p}}$,
and one aims to replace $L^p$ by more general $X$.
The main result of \cite{H09, H12, I16, INS19} is that  if $X$ is a Banach function space and
the maximal operator is bounded on the associate space $X'$, then $\|f\|_{\BMO}\simeq \|f\|_{\BMO_{X}}$.

Defining the $\BMO_X$-norm by (\ref{defbmox}) has the drawback that it is strongly adapted to the case when $X=L^p$. When $X$ is the Orlicz space $\f(L)$, a more natural way to define the corresponding space
$\BMO_{\f}$ is by using the normalized Luxemburg norm
$$
\nrm{f}_{\f,Q}:=  \inf \cbraces{\alpha>0 : \frac{1}{\abs{Q}} \int_Q \varphi\has{\frac{\abs{f}}{\a}}\dd x \leq 1}
$$
and defining $\BMO_{\f}$ as the space of all locally integrable $f\colon \R^n \to \R$ such that
\begin{equation*}
\|f\|_{\BMO_{\f}}:=\sup_Q\|f-\langle f\rangle_Q\|_{\f,Q}<\infty.
\end{equation*}
Moreover, the a priori local integrability assumption may not be natural for certain $\varphi$. To alleviate this problem one can define $\BMO_\varphi^*$ as the space of all $f$ such that
\begin{equation*}
\|f\|_{\BMO_{\f}^*}:=\sup_Q \inf_c \|f-c\|_{\f,Q}<\infty,
\end{equation*}
where the infimum is taken over all scalars $c$.  This approach has been used in a recent paper \cite{CPR20}, which in turn was motivated by \cite{LSSVZ15}. In particular, the estimate $\|f\|_{\BMO}\lesssim \|f\|_{\BMO_{\f}^*}$ was shown in \cite{CPR20}
under rather general assumptions on $\f$.

In this paper we unify both approaches.
Let $X_Q$ be a (quasi)-Banach function space over $Q$ for every cube $Q\subset {\mathbb R}^n$. We call $X = \cbrace{X_Q}$ a \emph{family of normalized (quasi)-Banach function spaces} if, for every cube $Q \subset \R^n$, we have
$
\nrm{\chi_Q}_{X_Q}\simeq 1.
$ For example, if
$$
\|f\|_{X_Q}:=\frac{\|f\chi_Q\|_{X}}{\|\chi_Q\|_X},
$$
then $\|\chi_Q\|_{X_Q}=1$ and if $\|f\|_{X_Q}:=\|f\|_{\f,Q}$, then $\|\chi_Q\|_{X_Q}=1/\f^{-1}(1)$.
Given a family of  normalized (quasi)-Banach function spaces  $X = \cbrace{X_Q}$, we define  $\BMO_X$  as the space of all locally integrable $f\colon \R^n \to \R$ such that
$$
\nrm{f}_{\BMO_X} :=\sup_Q \,\nrm{f-\ip{f}_Q}_{X_Q}<\infty,
$$
and $\BMO_X^*$ as the space of all $f$ such that
$$
\nrm{f}_{\BMO_X^*} :=\sup_Q \,\inf_c\, \nrm{f-c}_{X_Q}<\infty.
$$
We of course trivially have $\BMO_X \hookrightarrow  \BMO_X^*$ and in the classical setting the converse holds as well. The
John--Nirenberg inequality \cite{JN61} can be rephrased as $\|f\|_{\BMO}\simeq \|f\|_{\BMO_{\exp L}}$, where we denote
$$\exp L(Q):=\{f:\|f\|_{\f,Q}<\infty\}.$$
for $\varphi(t) = \ee^t-1$

Our first main result is a complete characterization of the embedding $\BMO\hookrightarrow \BMO_X$, which implies the embedding $\BMO\hookrightarrow \BMO_X^*$. Since any function in $\BMO$ is locally integrable, it is natural to study the stronger embedding involving $\BMO_X$.

\begin{theorem}\label{mr}
Let $X = \cbrace{X_Q}$ be a family of normalized quasi-Banach function spaces. The following statements are equivalent:
\begin{enumerate}[(i)]
\item $\BMO \hookrightarrow \BMO_X$.
\item \label{it:mr:2} For any cube $Q$ and sequence of nested sets $\O_k\subset Q$ satisfying $|\O_k|\le 2^{-k}|Q|$ we have
\begin{equation*}
\nrms{\sum_{k=0}^{\infty}\chi_{\O_k}}_{X_Q}\lesssim 1.
\end{equation*}
\item For any cube $Q$ we have $\exp L(Q) \hookrightarrow X_Q$.
\end{enumerate}
\end{theorem}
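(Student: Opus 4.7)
My plan is to prove the cyclic chain (iii) $\Rightarrow$ (i) $\Rightarrow$ (ii) $\Rightarrow$ (iii). The implication (iii) $\Rightarrow$ (i) is immediate from the John--Nirenberg inequality, which yields $\|f - \ip{f}_Q\|_{\exp L, Q} \lesssim \|f\|_{\BMO}$ for every $f \in \BMO$ and every cube $Q$; applying (iii) to $f - \ip{f}_Q$ on $Q$ gives $\|f - \ip{f}_Q\|_{X_Q} \lesssim \|f\|_{\BMO}$, so $\BMO \hookrightarrow \BMO_X$.

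For (ii) $\Rightarrow$ (iii), let $g \in \exp L(Q)$ with $\|g\|_{\exp L, Q} = \a$. Chebyshev applied to the Luxemburg integral gives $|\{x \in Q : |g(x)| > t\}| \leq 2|Q| e^{-t/\a}$, so the choice $\Omega_k := \{|g| > \a \log 2 \cdot (k+1)\} \cap Q$ produces a nested sequence with $|\Omega_k| \leq 2^{-k}|Q|$ and the layer-cake pointwise bound
\[
|g(x)| \leq \a \log 2 \cdot \left(1 + \sum_{k \geq 0} \chi_{\Omega_k}(x)\right) \quad \text{on } Q.
\]
Hypothesis (ii), together with the quasi-triangle inequality in $X_Q$ and $\|\chi_Q\|_{X_Q} \simeq 1$, then yields $\|g\|_{X_Q} \lesssim \a$.

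The main obstacle is (i) $\Rightarrow$ (ii), which requires manufacturing a genuine $\BMO$ function from the abstract nested data $\{\Omega_k\}$. My plan is to route the argument through a sparse family of dyadic subcubes of $Q$. First, perform a Calder\'on--Zygmund stopping-time construction: for each $k \geq 0$ let $\mathcal{P}_k$ denote the collection of maximal dyadic subcubes of $Q$ on which $\chi_{\Omega_k}$ has density exceeding $\tfrac12$, and set $F_k := \bigcup \mathcal{P}_k$. Then $F_k \supset \Omega_k$ a.e., $|F_k| \leq 2 |\Omega_k|$, and nestedness of $\{\Omega_k\}$ forces $F_{k+1} \subset F_k$. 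A secondary stopping time---threading the levels together rather than running them independently---extracts a sparse subfamily $\mathcal{S} \subset \bigcup_k \mathcal{P}_k$ (with some fixed sparseness parameter) such that $F := \sum_{P \in \mathcal{S}} \chi_P$ still pointwise dominates a constant multiple of $\sum_k \chi_{\Omega_k}$. Sparseness yields the Carleson packing bound $\sum_{P \in \mathcal{S},\, P \subset R}|P| \lesssim |R|$ for every cube $R$, and a standard computation for sums of characteristic functions over sparse cubes then gives $\|F\|_{\BMO} \lesssim 1$. Hypothesis (i) now supplies $\|F - \ip{F}_Q\|_{X_Q} \lesssim 1$; combined with $\ip{F}_Q \lesssim 1$ and $\|\chi_Q\|_{X_Q} \simeq 1$, the quasi-triangle inequality yields $\|F\|_{X_Q} \lesssim 1$, and the pointwise domination concludes $\|\sum_k \chi_{\Omega_k}\|_{X_Q} \lesssim 1$. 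The technical heart of the argument is the sparse extraction itself: the naive sum $\sum_k \chi_{F_k}$ only obeys a Carleson bound with a logarithmic defect and need not lie in $\BMO$, so the pruning to a truly sparse subcollection---fully exploiting both the dyadic structure and the geometric decay $|\Omega_k| \leq 2^{-k}|Q|$---is the step demanding real care.
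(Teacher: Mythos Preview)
Your implications (iii) $\Rightarrow$ (i) and (ii) $\Rightarrow$ (iii) are fine; the layer-cake argument for (ii) $\Rightarrow$ (iii) is essentially dual to the paper's route. The problem is the step (i) $\Rightarrow$ (ii): your claim that the sparse sum $F=\sum_{P\in\mathcal S}\chi_P$ lies in $\BMO(\R^n)$ is false.

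Take $n=1$, $Q=[0,1]$, $\Omega_k=[0,2^{-k}]$. Your Calder\'on--Zygmund/sparse extraction is forced to produce (up to harmless variants) $\mathcal S=\{[0,2^{-k}]:k\ge 0\}$, because $F$ must pointwise dominate $\sum_k\chi_{\Omega_k}(x)\simeq\log_2(1/x)$ on $(0,1]$. Now test $F$ (extended by $0$ outside $Q$) on the interval $I=[-\varepsilon,\varepsilon]$ with $\varepsilon=2^{-j}$: one computes $\langle F\rangle_I\simeq j/2$, and the left half $[-\varepsilon,0]$ alone contributes $\tfrac{1}{|I|}\int_{-\varepsilon}^0|F-\langle F\rangle_I|\simeq j/4\to\infty$. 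So $F\notin\BMO(\R)$. The Carleson packing bound you invoke gives $F\in\BMO_{\mathrm{dyadic}}(Q)$, but the hypothesis (i) is for genuine $\BMO(\R^n)$, and the zero-extension across $\partial Q$ destroys this. No choice of sparse subfamily can repair this: any $F$ dominating $\sum_k\chi_{\Omega_k}$ and vanishing outside $Q$ will have the same blow-up at the boundary.

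The paper avoids this obstruction by not building the $\BMO$ witness out of indicators at all. It proves (i) $\Rightarrow$ (iii) directly: given $\varphi\in\exp L(Q)$ with norm $1$, extend $\varphi$ by $0$ and set $f:=\log M(\mathrm e^{|\varphi|})$. The Coifman--Rochberg theorem gives $\|f\|_{\BMO}\lesssim 1$ with $f$ defined \emph{globally} and smoothly decaying off $Q$ (no jump at $\partial Q$), and $f\ge|\varphi|$ on $Q$ by the Lebesgue differentiation theorem. Then (i) and the bound $\langle f\rangle_Q\lesssim 1$ yield $\|\varphi\|_{X_Q}\le\|f\|_{X_Q}\lesssim 1$. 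The passage from (iii) back to (ii) is then the easy layer-cake direction you already have. The point is that $\log Mg$ is the correct global $\BMO$ surrogate for a locally large function; the raw sparse sum is not.
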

We will prove a quantitative and more general version of Theorem~\ref{mr} in Section \ref{sec:main}. In particular, we will show that the statements of Theorem~\ref{mr} are also equivalent to a norm estimate for so-called \emph{sparse} families of dyadic cubes.

\bigskip

For the converse embedding, it is, as already mentioned, not natural to a priori assume local integrability. Therefore, we will study the embedding $\BMO_X^*\hookrightarrow \BMO$. Whenever this embedding holds, we can of course conclude that any $f \in \BMO_X^*$ is locally integrable and moreover we have $\BMO_X \hookrightarrow \BMO$.

We do not obtain a full characterization of the embedding $\BMO_X^*\hookrightarrow \BMO$. However, we do obtain very weak sufficient conditions, which recover several results in the literature.

\begin{theorem}\label{jscor} Let $X=\{X_Q\}$ be a family of normalized quasi-Banach function spaces. Suppose either of the following conditions:
\begin{enumerate}[(i)]
\item\label{it:jscor:1} For any
cube $Q$ and $E\subset Q$ with $|E|\ge \frac12|Q|$, we have
$\nrm{\chi_E}_{X_Q} \gtrsim 1.$
\item\label{it:jscor:2} $\BMO_X^* = \BMO_X$.
\end{enumerate}
Then we have
$\BMO_X^* \hookrightarrow \BMO$.
\end{theorem}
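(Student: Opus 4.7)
The plan is to produce, for every cube $Q$ and every $f \in \BMO_X^*$, a constant $c_Q$ satisfying
\[
\frac{1}{|Q|}\int_Q |f - c_Q|\,\dd x \lesssim \nrm{f}_{\BMO_X^*},
\]
which together with the elementary inequality $\nrm{f}_{\BMO} \leq 2\sup_Q \inf_c \tfrac{1}{|Q|}\int_Q |f-c|\,\dd x$ yields the target embedding.

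\textbf{Hypothesis (i).} Pick $c_Q$ near-optimal in the $\BMO_X^*$ infimum, so that $\nrm{f - c_Q}_{X_Q} \leq 2\nrm{f}_{\BMO_X^*}$. For $\lambda > 0$, let $F_\lambda := \{x \in Q : |f(x) - c_Q| > \lambda\}$. From the pointwise bound $\lambda \chi_{F_\lambda} \leq |f - c_Q|$ and the lattice property of $X_Q$,
\[
\lambda\,\nrm{\chi_{F_\lambda}}_{X_Q} \leq \nrm{f - c_Q}_{X_Q}.
\]
Hypothesis (i) forces $|F_\lambda| < |Q|/2$ once $\lambda > A \nrm{f}_{\BMO_X^*}$ for a sufficiently large constant $A$, placing $c_Q$ within $A\nrm{f}_{\BMO_X^*}$ of a median of $f$ on $Q$. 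From here I would run a Calder\'on--Zygmund iteration: select the maximal dyadic subcubes $Q^{(1)}_j$ of $Q$ on which $|f - c_Q|$ is still large, apply the same median estimate on each $Q^{(1)}_j$ with its own near-minimizer $c_{Q^{(1)}_j}$ (whose $X_{Q^{(1)}_j}$-norm is again controlled by $\nrm{f}_{\BMO_X^*}$), and iterate. This produces geometric decay for the distribution function of $|f - c_Q|$ on $Q$, whose layer-cake integration gives the desired $L^1$-mean bound.

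\textbf{Hypothesis (ii).} Here $f$ is automatically locally integrable and $\nrm{f - \langle f\rangle_Q}_{X_Q} \lesssim \nrm{f}_{\BMO_X^*}$ for every $Q$. The plan is to repeat the lattice argument with $c_Q := \langle f\rangle_Q$: the mean-zero cancellation $\int_Q (f - \langle f\rangle_Q) = 0$ prevents $|\{f - \langle f\rangle_Q > \lambda\} \cap Q|$ and $|\{f - \langle f\rangle_Q < -\lambda\} \cap Q|$ from being simultaneously close to $|Q|/2$ without forcing a large contribution to $\nrm{f - \langle f\rangle_Q}_{X_Q}$. Combined with the same dyadic iteration as in (i), applied now using the uniform $\BMO_X$-bound on every subcube, this yields the $L^1$-mean control.

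\textbf{Main obstacle.} The bound $|F_\lambda| < |Q|/2$ produced in (i) sits exactly at the critical $\alpha = 1/2$ threshold of the John--Str\"omberg inequality, so one cannot invoke it as a black box. The heart of the proof is therefore the Calder\'on--Zygmund recursion on dyadic subcubes, which upgrades the single median-type inequality into genuine distribution-function decay. The fact that the $\BMO_X^*$-hypothesis (and (i) or (ii)) applies uniformly across all dyadic subcubes is what makes this iteration converge.
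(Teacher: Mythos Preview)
Your treatment of hypothesis~(i) works but is more elaborate than needed: the Calder\'on--Zygmund iteration you propose is essentially a re-proof of the John--Str\"omberg inequality, which the paper simply invokes. Once you have shown $((f-c_Q)\chi_Q)^*(|Q|/2)\lesssim\|f\|_{\BMO_X^*}$ (this is exactly your median control), the John--Str\"omberg theorem gives $\|f\|_{\BMO}\lesssim\|f\|_{\BMO_X^*}$ in one line. There is no critical-threshold obstruction at $\lambda=1/2$; your ``main obstacle'' is a non-issue.

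Your treatment of hypothesis~(ii), however, has a genuine gap. The mean-zero cancellation gives no usable distributional control without an $\A{\delta}$-type hypothesis: even if both sets $\{\pm(f-\langle f\rangle_Q)>\lambda\}$ had measure near $|Q|/2$, you would only obtain $\lambda\|\chi_E\|_{X_Q}\le\|f-\langle f\rangle_Q\|_{X_Q}$ for some $E\subset Q$ with $|E|$ close to $|Q|$, and the normalization $\|\chi_Q\|_{X_Q}\simeq 1$ says nothing about $\|\chi_E\|_{X_Q}$ for a proper subset $E$. Your iteration therefore cannot start. The paper's argument is completely different: it first shows that $f\mapsto|f|$ is \emph{always} bounded on $\BMO_X^*$, via
\[
\inf_c\,\bigl\|\,|f|-c\bigr\|_{X_Q}\lesssim\bigl\|(x,y)\mapsto f(x)-f(y)\bigr\|_{X_Q\times X_Q}\lesssim\inf_c\,\|f-c\|_{X_Q},
\]
and then proves (Proposition~\ref{prop:charembed}) that boundedness of $f\mapsto|f|$ on $\BMO_X$ characterizes the embedding $\BMO_X\hookrightarrow\BMO$, the key computation being $\langle|g|\rangle_Q\lesssim\|\,|g|-\langle|g|\rangle_Q\|_{X_Q}+\|g\|_{X_Q}$ applied to $g=f-\langle f\rangle_Q$. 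Under hypothesis~(ii) these combine to give the result.
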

The assumption in Theorem \ref{jscor}\ref{it:jscor:1} is rather weak and easily checkable in concrete situations. However, in Example \ref{ex} we will show that it is not necessary for either the embedding $\BMO_X^* \hookrightarrow \BMO$ or the identity $\BMO_X^* = \BMO_X$. On the other hand, without any assumptions on $X$ the embedding $\BMO_X^* \hookrightarrow \BMO$ can fail, see Example \ref{ex:expweight}. We leave a full characterization of the embedding $\BMO_X^* \hookrightarrow \BMO$ as an open problem. In Proposition \ref{prop:charembed} we will characterize the weaker embedding $\BMO_X \hookrightarrow \BMO$ in terms of the boundedness of the mapping $f \mapsto \abs{f}$ on $\BMO_X$, which, as we will see, implies the sufficiency of the assumption in Theorem \ref{jscor}\ref{it:jscor:2} for the embedding $\BMO_X^* \hookrightarrow \BMO$.

We note that Theorems \ref{mr} and \ref{jscor} recover all previously known results in this area, as we will explain in Sections \ref{sec:suf} and \ref{sec:ex}. Moreover, when the conditions of both theorems hold, we can conclude that
\begin{align}\label{eq:allequal}
  \BMO = \BMO_X = \BMO_{X}^*.
\end{align}
A full characterization of this statement is left as an open problem. In particular, one may wonder whether the embedding $\BMO\hookrightarrow \BMO_X$ implies the embeddings $\BMO_X^* \hookrightarrow \BMO$ or $\BMO_X \hookrightarrow \BMO$? If this is the case, then \eqref{eq:allequal} is equivalent to any of the statements in Theorem~\ref{mr}

\bigskip
This paper is organized as follows:
\begin{itemize}
  \item After some preliminaries on dyadic cubes and Banach function spaces in Section \ref{sec:prelim}, we  will prove Theorems \ref{mr} and \ref{jscor} in Section~\ref{sec:main}. Moreover, we will discuss a surprising self-improvement result of the embedding $\BMO \hookrightarrow \BMO_X$.
  \item We will discuss some efficient and easy to use sufficient conditions to check the assumptions in Theorem \ref{mr} in Section \ref{sec:suf}.
  \item We will study our main results in several concrete situations in Section \ref{sec:ex}. In particular, we will study the case of weighted $L^1$-spaces, rearrangement invariant Banach function spaces, Orlicz spaces and variable exponent $L^p$-spaces.
  \item We state various open problems in Section \ref{sec:open}.
\end{itemize}

\section{Preliminaries}\label{sec:prelim}
\subsection{Dyadic cubes}\label{subsec:sparse} Denote by ${\mathcal Q}$ the set of all cubes $Q\subset {\mathbb R}^n$ with sides parallel to the axes and denote the space of all locally integrable functions by $L^1_{\loc}(\R^n)$. Let $M$ denote the standard Hardy-Littlewood maximal operator, i.e. for $f \in L^1_{\loc}(\R^n)$ we set
$$
Mf := \sup_{Q \in \mc{Q}} \, \ip{\abs{f}}_Q \chi_Q.
$$
Given a cube $Q\in {\mathcal Q}$, denote by
${\mathcal D}(Q)$ the set of all dyadic cubes with respect to $Q$, that is, the cubes obtained by repeated subdivision of $Q$ and each of its descendants into $2^n$ congruent subcubes.

\begin{definition}
Let $Q\in {\mathcal Q}$ and $\eta \in (0,1)$. We say that a family of dyadic cubes ${\mathcal F}\subset {\mathcal D}(Q)$ is $\eta$-sparse if $|E_P|\ge \eta|P|$ for all $P\in {\mathcal F}$, where
$$E_P:=P\setminus \bigcup_{P'\in {\mathcal F}: P'\subset P}P'.$$
\end{definition}
We can always decompose an $\eta$-sparse family as ${\mathcal F}=\cup_{k=0}^{\infty}{\mathcal F}_k$,
where each ${\mathcal F}_k$ is a family of pairwise disjoint cubes, and for $\O_k=\cup_{P\in {\mathcal F}_k}P$ we have $\O_{k}\subset \O_{k-1}$. Moreover, we have
\begin{align*}
|\Omega_{k}|&=\sum_{P\in \Omega_{k-1}}|\Omega_{k}\cap P|=\sum_{P\in \Omega_{k-1}}\hab{|P|-|P\setminus \Omega_{k}|}\\&=\sum_{P\in \Omega_{k-1}}\hab{|P|-|P\setminus E_P|}
\le (1-\eta)\sum_{P\in \Omega_{k-1}}|P|=(1-\eta)|\Omega_{k-1}|,
\end{align*}
which in turn implies
\begin{equation}\label{omegak}
|\Omega_k|\le (1-\eta)^k|Q|.
\end{equation}

\subsection{(Quasi-)Banach function spaces}\label{sec:BFS}
Denote the space of measurable functions $f \colon \R^n \to \R$ by $L^0(\R^n)$.
An order ideal $X \subset L^0(\R^n)$ equipped with a (quasi-)norm $\nrm{\,\cdot\,}_X$ is called
a \emph{(quasi-)Banach function space} if it satisfies the following properties:
\begin{itemize}
  \item \textit{Compatibility:} If $f,g \in X$ with $\abs{f}\leq \abs{g}$, then $\nrm{f}_X\leq \nrm{g}_X$.
  \item \textit{Weak order unit:} There is an $f \in X$ with $f > 0$ a.e.
  \item \textit{Fatou property:} If $0\leq f_n \uparrow f$ and $\sup_{n\in \N}\nrm{f_n}_X<\infty$, then $f \in X$ and $\nrm{f}_X=\sup_{n\in\N}\nrm{f_n}_X$.
\end{itemize}

\begin{remark}
Our definition of a Banach function space coincides with the definition of a \emph{saturated Banach function space with the Fatou property} in \cite[Chapter 15]{Za67}, using $\R^n$ as measure space. Indeed, saturation as defined in \cite[Section 67]{Za67} is implied by the existence of a weak order unit through \cite[Theorem 67.1]{Za67} and the converse follows  from \cite[Theorem 67.4(b)]{Za67}.
Moreover, we note that the Fatou property ensures that $X$ is complete, see \cite[Theorem 65.1]{Za67}.
\end{remark}

If $X$ is a Banach function space, we define its associate space $X'$ as the space of all $g \in L^0(\R^n)$ such that
\begin{equation*}
\nrm{g}_{X'}:= \sup_{\nrm{f}_X \leq 1} \int_{\R^n} \abs{fg}<\infty,
\end{equation*}
which is again a Banach function space, see \cite[Theorem 68.1 and Theorem 71.4(b)]{Za67}.
 Moreover, by the Lorentz--Luxembourg theorem (see \cite[Theorem 71.1]{Za67}) we have $X''=X$, so in particular
\begin{equation}\label{eq:LL}
\nrm{f}_{X}= \sup_{\nrm{g}_{X'} \leq 1} \int_{\R^n} \abs{fg}.
\end{equation}

For any $f \in L^0(\R^n)$ we define its left-continuous non-increasing rearrangement by
$$f^*(t)=\inf\big\{\a>0:|\{x\in {\mathbb R}^n:|f(x)|>\a\}|< t\big\},\qquad t >0.$$
We call two functions $f,g \in L^0(\R^n)$ \emph{equimeasurable} if $f^*=g^*$. A quasi-Banach function space is called $X$ \emph{rearrangement invariant} if
for any equimeasurable $f,g \in X$  we have
$
\nrm{f}_X = \nrm{g}_X.
$

A Banach function space $X$ is called \emph{$q$-concave} for $q \in [1,\infty)$ if there is a constant $C>0$ such that for $f_1,\ldots,f_m\in X$ we have
\begin{equation*}
   \has{\sum_{k=1}^m \nrm{f_k}_X^q}^{1/q} \leq C\, \nrms{\has{\sum_{k=1}^m\abs{f_k}^q}^{1/q}}_{X}.
\end{equation*}
By renorming, we may assume without loss of generality that $C=1$ (see \cite[Theorem 1.d.8]{LT79}).

\section{Main results}\label{sec:main}
In this section we will prove our main results, i.e. Theorems \ref{mr} and \ref{jscor} from the introduction.

\subsection{The embedding \texorpdfstring{$\BMO \hookrightarrow \BMO_X$}{BMO -> BMOX}}
The proof of Theorem \ref{mr} is based on two key ingredients. The first is a sparse domination estimate for the oscillation $|f-\langle f\rangle_Q|$, which can be found in, e.g., \cite[Lemma 3.1.2]{Hyt18} or \cite[Proposition 5.4]{LLO21}.

\begin{prop}\label{spes}
Let $f \in L^1_{\loc}(\R^n)$. For any cube $Q$ and $\eta \in (0,1)$
there exists an $\eta$-sparse family ${\mathcal F}\subset \mc{D}(Q)$ such that for a.e. $x\in Q$,
\begin{equation*}
|f-\langle f\rangle_Q| \chi_Q(x)\lesssim \sum_{P\in {\mathcal F}}\left(\frac{1}{\abs{P}}\int_P\abs{f - \langle f\rangle_P}\right)\chi_P(x).
\end{equation*}
\end{prop}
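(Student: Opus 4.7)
The plan is to construct $\mathcal{F}$ by an iterative Calder\'on--Zygmund stopping-time argument and combine it with Lebesgue differentiation and a telescoping estimate. Starting from $\mathcal{F}_0 = \{Q\}$ and given $\mathcal{F}_k$, for each $P \in \mathcal{F}_k$ I declare $\mathrm{ch}_{\mathcal{F}}(P)$ to consist of the maximal dyadic cubes $P' \in \mathcal{D}(P) \setminus \{P\}$ satisfying
\[
\frac{1}{|P'|}\int_{P'}|f - \langle f \rangle_P| > \Lambda \cdot \langle |f - \langle f \rangle_P|\rangle_P,
\]
with threshold $\Lambda := 1/(1-\eta)$. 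The weak $(1,1)$ bound for the dyadic maximal operator applied to $(f - \langle f \rangle_P)\chi_P$ yields $\sum_{P' \in \mathrm{ch}_{\mathcal{F}}(P)}|P'| \leq |P|/\Lambda = (1-\eta)|P|$, hence $|E_P| \geq \eta|P|$, and $\mathcal{F} := \bigcup_{k \geq 0}\mathcal{F}_k$ is $\eta$-sparse.

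For the pointwise estimate, sparseness implies that the set of $x$ belonging to an infinite decreasing chain in $\mathcal{F}$ has measure zero, so a.e.\ $x \in Q$ lies in a unique $E_{P_k}$ along a finite chain $Q = P_0 \supsetneq P_1 \supsetneq \cdots \supsetneq P_k$ in $\mathcal{F}$. Every strictly proper dyadic subcube of $P_k$ containing $x$ must fail the stopping condition (otherwise $x$ would lie in some $P' \in \mathrm{ch}_{\mathcal{F}}(P_k)$), so Lebesgue differentiation yields
\[
|f(x) - \langle f \rangle_{P_k}| \leq \Lambda \cdot \langle |f - \langle f \rangle_{P_k}|\rangle_{P_k},
\]
and telescoping gives $|f(x) - \langle f \rangle_Q| \leq |f(x) - \langle f \rangle_{P_k}| + \sum_{j=0}^{k-1}|\langle f \rangle_{P_{j+1}} - \langle f \rangle_{P_j}|$.

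The key step is bounding each jump. If the dyadic parent $\widehat{P}_{j+1}$ of $P_{j+1}$ inside $P_j$ is strictly contained in $P_j$, then by maximality it fails the stopping condition relative to $P_j$, so
\begin{align*}
|\langle f \rangle_{P_{j+1}} - \langle f \rangle_{P_j}| &\leq \frac{1}{|P_{j+1}|}\int_{P_{j+1}}|f - \langle f \rangle_{P_j}| \\
&\leq \frac{2^n}{|\widehat{P}_{j+1}|}\int_{\widehat{P}_{j+1}}|f - \langle f \rangle_{P_j}| \leq 2^n \Lambda \langle |f - \langle f \rangle_{P_j}|\rangle_{P_j};
\end{align*}
the edge case $\widehat{P}_{j+1} = P_j$ is immediate from $|P_{j+1}| = 2^{-n}|P_j|$. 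Using that $\chi_{P_j}(x) = 1$ for each $j$ since $x \in P_k \subset P_j$, summing yields
\[
|f(x) - \langle f \rangle_Q| \lesssim \sum_{j=0}^k \langle |f - \langle f \rangle_{P_j}|\rangle_{P_j}\chi_{P_j}(x) \leq \sum_{P \in \mathcal{F}}\langle |f - \langle f \rangle_P|\rangle_P \chi_P(x),
\]
which is the claimed sparse domination. The main obstacle I anticipate is the bookkeeping for the maximality argument controlling the jumps of averages, in particular handling uniformly the edge case in which the dyadic parent of a stopping cube coincides with the cube on which stopping is performed.
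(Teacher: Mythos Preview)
Your argument is correct and is precisely the standard Calder\'on--Zygmund stopping-time construction used in the references the paper cites; the paper itself does not give a proof of this proposition but simply refers to \cite[Lemma~3.1.2]{Hyt18} and \cite[Proposition~5.4]{LLO21}, whose arguments proceed exactly along the lines you describe.
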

The second ingredient is the following well known result of Coifman--Rochberg \cite{CR80}.
\begin{prop}\label{cr} Let $f \in L^1_{\loc}(\R^n)$ such that $Mf<\infty$ a.e. Then $\log(Mf)\in \BMO$ with
$$
\|\log(Mf)\|_{\BMO}\lesssim 1.
$$
\end{prop}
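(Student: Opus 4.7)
The strategy is to bound, for each cube $Q$, the best-constant oscillation $\inf_{c} \frac{1}{|Q|}\int_Q |\log(Mf) - c|\,dx$ by an absolute constant; since this infimum is comparable to the mean-based oscillation up to a factor of $2$, this yields $\|\log(Mf)\|_{\BMO} \lesssim 1$. Fix $Q$, split $f = f_1 + f_2$ with $f_1 := f\chi_{2Q}$, and choose as comparison constant $c_Q := \log \alpha_Q$, where $\alpha_Q := \essinf_{x \in Q} Mf(x)$. The virtue of this choice is twofold: first, $\log(Mf) - c_Q \geq 0$ a.e.\ on $Q$, so the absolute value disappears; second, $\alpha_Q \geq \langle |f|\rangle_{2Q}$, because the average over $2Q$ is a legitimate competitor in the maximal function at every $y \in Q \subset 2Q$.

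The main technical step is a \emph{geometric localization} for $Mf_2$: any cube $R \ni x$ with $x \in Q$ that meets $(2Q)^c$ must satisfy $\ell(R) \gtrsim \ell(Q)$, so $R \cup Q$ fits inside a common cube $R' \ni y$ with $|R'| \lesssim |R|$ for every $y \in Q$. Averaging $|f_2|$ over $R'$ therefore gives $Mf_2(x) \lesssim Mf_2(y)$ uniformly in $x,y \in Q$, and hence $Mf_2 \lesssim \alpha_Q$ on $Q$. Combining with $Mf \leq Mf_1 + Mf_2$ yields $Mf(x) \leq Mf_1(x) + C_n \alpha_Q$ on $Q$, so for $t$ above an absolute threshold the weak $(1,1)$ bound for $M$ applied to $f_1$ gives the exponential decay
$$
\bigl|\{x \in Q : Mf(x) > e^t \alpha_Q\}\bigr| \,\leq\, \bigl|\{Mf_1 > e^t \alpha_Q /2\}\bigr| \,\lesssim\, \frac{\int_{2Q}|f|}{e^t \alpha_Q} \,\lesssim\, |Q|\, e^{-t},
$$
where the last inequality uses $\langle|f|\rangle_{2Q} \leq \alpha_Q$.

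The layer-cake representation then closes the argument:
$$
\frac{1}{|Q|} \int_Q \bigl(\log(Mf) - c_Q\bigr)\, dx \,=\, \int_0^\infty \frac{|\{x \in Q : Mf(x) > e^t \alpha_Q\}|}{|Q|}\, dt,
$$
in which the portion up to the threshold contributes $O(1)$ trivially and the tail converges by the exponential bound above. The main obstacle is the localization lemma for $Mf_2$, which is purely geometric but requires the right choice of enveloping cubes; the remaining ingredients (layer cake, weak $(1,1)$, and the equivalence of mean-based and best-constant BMO seminorms) are routine. A minor point is the degenerate case $\alpha_Q = 0$, handled by replacing $\alpha_Q$ with $\alpha_Q + \langle|f|\rangle_{2Q}$ (positive whenever $f$ does not vanish a.e.\ on $2Q$, which is the only case where there is anything to prove).
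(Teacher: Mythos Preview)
The paper does not prove this proposition; it is quoted as a known result of Coifman--Rochberg \cite{CR80} and used as a black box. Your argument is the standard proof and is correct: the splitting $f=f\chi_{2Q}+f\chi_{(2Q)^c}$, the geometric fact that $Mf_2$ is essentially constant on $Q$ (hence $\lesssim\alpha_Q$), the weak $(1,1)$ bound for $Mf_1$, and the layer-cake computation combine exactly as you describe.

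One small correction to your handling of the degenerate case: since $\alpha_Q\ge\langle|f|\rangle_{2Q}$, replacing $\alpha_Q$ by $\alpha_Q+\langle|f|\rangle_{2Q}$ does not buy anything when $\alpha_Q=0$. The clean observation is that if $f\not\equiv 0$ a.e.\ on $\mathbb{R}^n$, then $Mf(x)>0$ for \emph{every} $x$, and since $Mf$ is lower semicontinuous it attains its infimum on the compact set $\overline{Q}$, so $\alpha_Q>0$ automatically. (If $f\equiv 0$ a.e.\ there is nothing to prove.) Also, your parenthetical ``which is the only case where there is anything to prove'' is not quite right: even when $f$ vanishes a.e.\ on $2Q$ but not globally, $Mf$ is positive on $Q$ and the oscillation estimate still needs checking---but then $Mf=Mf_2$ on $Q$, and your localization lemma already shows $Mf_2$ is comparable to a constant there, so this case is immediate.
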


Using Propositions \ref{spes} and \ref{cr}, we can now prove the following, more general version of
Theorem \ref{mr}.

\begin{theorem}\label{mrfull}
Let $X = \cbrace{X_Q}$ be a family of normalized quasi-Banach function spaces and let $\eta,\gamma \in (0,1)$. The following statements are equivalent:
\begin{enumerate}[(i)]
\item\label{it:1} $\BMO \hookrightarrow \BMO_X$, i.e. there is a constant $C_1>0$ such that for $f \in \BMO$
$$
\nrm{f}_{\BMO_X} \leq C_1 \nrm{f}_{\BMO}
$$
\item\label{it:2} There is a constant $C_2>0$ such that for every cube $Q\in \mc{Q}$ and any $\eta$-sparse family ${\mathcal F}\subset {\mathcal D}(Q)$  we have
$$\Big\|\sum_{P\in {\mathcal F}}\chi_P\Big\|_{X_Q}\leq C_2.$$
\item\label{it:3} There is a constant $C_3>0$ such that  for every cube $Q \in \mc{Q}$ and any sequence of nested sets $\O_k\subset Q$ satisfying $|\O_k|\le \ga^k|Q|$ we have
\begin{equation*}
\nrms{\sum_{k=0}^{\infty}\chi_{\O_k}}_{X_Q}\leq C_3.
\end{equation*}
\item\label{it:4} There is a constant $C_4>0$ such that for any $Q \in \mc{Q}$ and $f \in \exp L(Q)$ we have
$$
\nrm{f}_{X_Q} \leq C_4 \, \nrm{f}_{\exp L(Q)}.
$$
\end{enumerate}
Moreover, we have
$
C_1\simeq C_2\simeq C_3\simeq C_4.
$
\end{theorem}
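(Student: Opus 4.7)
The plan is to close the cycle (ii) $\Rightarrow$ (i) $\Rightarrow$ (iv) $\Rightarrow$ (iii) $\Rightarrow$ (ii), tracking constants so that $C_1\lesssim C_2\lesssim C_3\lesssim C_4\lesssim C_1$. Proposition \ref{spes} powers (ii) $\Rightarrow$ (i) and Proposition \ref{cr} is the heart of (i) $\Rightarrow$ (iv); the remaining two implications are soft manipulations with level sets.

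For the three direct implications: (ii) $\Rightarrow$ (i) follows instantly from Proposition \ref{spes}, as any $f \in \BMO$ satisfies $|f-\ip{f}_Q|\chi_Q \lesssim \|f\|_{\BMO}\sum_{P\in\mathcal{F}}\chi_P$ on the sparse family it produces, and taking the $X_Q$-norm yields $C_1\lesssim C_2$. For (iii) $\Rightarrow$ (ii), the layer decomposition from Section \ref{subsec:sparse} rewrites $\sum_{P\in\mathcal F}\chi_P = \sum_k\chi_{\Omega_k}$ with $|\Omega_k|\le(1-\eta)^k|Q|$; retaining every $m$-th layer (with $m$ chosen so that $(1-\eta)^m\le\gamma$) matches hypothesis (iii) at the cost of a factor $m$. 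For (iv) $\Rightarrow$ (iii), a direct layer-cake computation on $\frac{1}{|Q|}\int_Q(e^{f/\alpha}-1)$ shows that $f := \sum_k\chi_{\Omega_k}$ lies in $\exp L(Q)$ with $\|f\|_{\exp L(Q)}\lesssim 1$, so hypothesis (iv) gives $C_3\lesssim C_4$.

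The substantive implication is (i) $\Rightarrow$ (iv). For $f \in \exp L(Q)$ with $\|f\|_{\exp L(Q)}\le 1$, set $\tilde f := e^{|f|}\chi_Q\in L^1(\R^n)$ and $g := \log M\tilde f$. Proposition \ref{cr} gives $g \in \BMO$ with $\|g\|_{\BMO}\lesssim 1$, and hypothesis (i) yields $\|g-\ip{g}_Q\|_{X_Q}\lesssim C_1$. The Lebesgue differentiation theorem gives $M\tilde f \ge e^{|f|}$ a.e.\ on $Q$, whence $|f|\le g$ there, and the compatibility axiom of $X_Q$ gives $\|f\|_{X_Q}\le\|g\|_{X_Q}$. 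Testing against the cube $Q$ itself shows $g\ge 0$ on $Q$, and a layer-cake argument using the weak-$(1,1)$ bound for $M$ gives $\ip{g}_Q\lesssim 1$. The normalization $\|\chi_Q\|_{X_Q}\simeq 1$ and the quasi-triangle inequality then yield $\|f\|_{X_Q}\lesssim C_1+1$, i.e., $C_4\lesssim C_1$.

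The main obstacle is this last step, because Proposition \ref{cr} controls only the oscillation of $g$, not its pointwise size on $Q$. The extra quantitative ingredient is the weak-$(1,1)$ inequality for $M$, which, via the layer-cake formula, bounds $\ip{g}_Q$ by a universal constant once $\|\tilde f\|_1\lesssim|Q|$. A minor technical point, absorbed into $\lesssim$, is that $X_Q$ is only quasi-Banach, so its triangle inequality carries a constant; this issue, and the parameter mismatch between $\eta$ and $\gamma$ in (iii) $\Rightarrow$ (ii), only affect implicit constants.
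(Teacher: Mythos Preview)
Your proof is correct and follows essentially the same cycle (ii) $\Rightarrow$ (i) $\Rightarrow$ (iv) $\Rightarrow$ (iii) $\Rightarrow$ (ii) as the paper, with the same key inputs (Propositions \ref{spes} and \ref{cr}). The only minor differences are cosmetic: for $\ip{g}_Q\lesssim 1$ in (i) $\Rightarrow$ (iv) you use the layer-cake formula with the weak-$(1,1)$ bound where the paper uses $\log t\lesssim t^{1/2}$ plus Kolmogorov's inequality (which is the same estimate in disguise), and for the parameter mismatch in (iii) $\Rightarrow$ (ii) you regroup the layers into $m$ subsequences directly, whereas the paper first closes the loop for $\gamma=1-\eta$ and then observes that (i) is independent of $\gamma$.
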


\begin{proof} We will prove the implications
$$
\text{\ref{it:2}}\Rightarrow\text{\ref{it:1}}\Rightarrow\text{\ref{it:4}}\Rightarrow\text{\ref{it:3}}\Rightarrow\text{\ref{it:2}}.
$$
Let us start with the implication \ref{it:2}$\Rightarrow$\ref{it:1}, which is an immediate consequence of Proposition \ref{spes}. Indeed, let $Q \in \mc{Q}$ be an arbitrary cube. For $f\in \BMO$,  we obtain  by Proposition \ref{spes} that
there exists an $\eta$-sparse family ${\mathcal F}\subset \mc{D}(Q)$ such that for a.e. $x\in Q$
$$
|f-\langle f\rangle_Q| \chi_Q(x)\lesssim \|f\|_{\BMO}\sum_{P\in {\mathcal F}}\chi_P(x).
$$
From this and \ref{it:2} we obtain
\begin{equation*}
\|f-\langle f\rangle_Q\|_{X_Q}\lesssim \|f\|_{\BMO}\Big\|\sum_{P\in {\mathcal F}}\chi_P\Big\|_{X_Q}\leq C_2 \, \|f\|_{\BMO},
\end{equation*}
which proves \ref{it:1}.

For \ref{it:1}$\Rightarrow$\ref{it:4} fix $Q \in \mc{Q}$ and let $\varphi  \in \exp L(Q)$ with
$\nrm{\varphi}_{\exp L(Q)} =1$, i.e.
$$
\frac{1}{\abs{Q}}\int_Q e^{\abs{\f}} = 2
$$
Extend $\varphi$ by zero outside $Q$ and define $f \in L^1_{\loc}(\R^n)$ by $f := \log[M(\ee^{\abs{\varphi}})]$. Since $\ee^{\abs{\varphi}} \in L^1(Q)$ and $f \equiv 1$ on $\R^n \setminus Q$, we have $M(\ee^{\abs{\varphi}})<\infty$ a.e. Hence, by Proposition \ref{cr}, we have
$$\|f\|_{BMO}\lesssim 1.$$
Furthermore, using $\log(t) \lesssim t^{1/2}$ and Kolmogorov's inequality, we have
\begin{align*}
\int_Q f &\lesssim \int_Q[M(\ee^{\abs{\f}}\chi_Q)]^{1/2}+|Q|
\lesssim  \Big(\frac{1}{|Q|}\int_Qe^{\abs{\f}}\Big)^{1/2}|Q|+|Q|\lesssim |Q|.
\end{align*}
Combining this with the previous estimate and applying \ref{it:1} yields
\begin{align*}
\|\f\|_{X_Q}\le \|f\|_{X_Q}&\lesssim \|(f-\langle f\rangle_Q)\|_{X_Q}+\langle f\rangle_Q\\
&\leq C_1 \, \|f\|_{\BMO}+\langle f\rangle_Q\lesssim C_1,
\end{align*}
which proves \ref{it:4} by homogeneity.

Next, for \ref{it:4}$\Rightarrow$\ref{it:3} we fix a cube $Q \in \mc{Q}$ and a sequence of nested sets $\O_k\subset Q$ satisfying $|\O_k|\le \ga^k|Q|$ with $0<\ga<1$. Set
$$\f:=\sum_{k=0}^\infty\chi_{\Omega_k}=\sum_{k=0}^{\infty}(k+1)\chi_{\O_k\setminus \O_{k+1}}.$$
Denote $c_{\gamma}=\frac{1}{2}\log\frac{1}{\gamma}$.
Observe that we have
\begin{align*}
\int_Qe^{c_{\gamma}\f}&=\sum_{k=0}^{\infty}e^{c_{\gamma}(k+1)}|\O_k\setminus\O_{k+1}|\\
&\le e^{c_{\gamma}}\Big(\sum_{k=0}^{\infty}(e^{c_{\gamma}}\gamma)^k\Big)|Q|\le c_\gamma'|Q|,
\end{align*}
so $\varphi \in \exp L(Q)$ with $\nrm{\varphi}_{\exp L(Q)}$ only depending on $\gamma$. Thus by \ref{it:4}  we  conclude
\begin{equation*}
\Big\|\sum_{k=0}^{\infty}\chi_{\Omega_k}\Big\|_{X_Q}\leq C_4\, \nrm{\varphi}_{\exp L(Q)}\lesssim C_4.
\end{equation*}

Finally \ref{it:3}$\Rightarrow$\ref{it:2} with $\eta = 1-\gamma$ follows directly from \eqref{omegak}. The general case follows since we now have e.g. \ref{it:1}$\Leftrightarrow$\ref{it:3} and \ref{it:1} is independent of $\gamma$.
\end{proof}

\begin{remark}
Theorem \ref{mrfull} includes the classical John-Nirenberg inequality as a special case. Indeed, it is clear that the family of normalized Banach function spaces $\cbrace{\exp L(Q)}$ satisfies Theorem \ref{mrfull}\ref{it:4}. Therefore, Theorem \ref{mrfull} yields
$$
\nrm{f}_{\BMO_{\exp L}} \lesssim \nrm{f}_{\BMO},
$$
i.e. there is a $C>0$ such that for any $Q \in \mc{Q}$ we have
$$
\frac{1}{\abs{Q}}\int_Q \ee^{\frac{C}{\nrm{f}_{\BMO}}\cdot\abs{f-\ip{f}_Q}} \leq 2
$$
This immediately implies for any $\alpha>0$ that
\begin{align*}
\absb{\{x\in Q:|f(x)-\langle f\rangle_Q|>\a\}}\leq  2|Q|\,\ee^{-\frac{C}{\|f\|_{\BMO}}\a}.
\end{align*}
Moreover,  Theorem \ref{mrfull} proves that $\exp L$ is extremal in the following sense: If $\BMO \hookrightarrow \BMO_X$, then we must have  $\exp L(Q) \hookrightarrow X_Q$ for all $Q \in \mc{Q}$.
\end{remark}

\subsection{The space \texorpdfstring{$\BMO_{MX}$}{BMOMX}}
Before turning to the converse embedding $\BMO_X^* \hookrightarrow \BMO$, we will first briefly discuss a self-improvement result of the embedding $\BMO \hookrightarrow \BMO_X$.
Given a cube $Q$ and an $f \in L^1_{\loc}(\R^n)$, define the local dyadic maximal operator by
$$M_Qf:=\sup_{P\in {\mathcal D}(Q)}\langle|f|\rangle_P\chi_P.$$
Let $X = \cbrace{X_Q}$ be a family of normalized quasi-Banach function spaces and let us define $\BMO_{MX}$ as the space of all locally integrable $f$ such that
$$\|f\|_{\BMO_{MX}}:=\sup_Q\|M_Q(f-\langle f\rangle_Q)\|_{X_Q}.$$
Then we obviously have
\begin{equation*}
\|f\|_{\BMO}\le \|f\|_{\BMO_{MX}}.
\end{equation*}
On the other hand, it was observed in \cite[Remark 5.6]{LLO21} that a stronger variant of Proposition \ref{spes} holds with the left-hand side replaced by $M_Q(f-\langle f\rangle_Q)$.
Therefore, if the sparse condition in Theorem \ref{mrfull}\ref{it:2} holds, we obtain that
$$\|M_Q(f-\langle f\rangle_Q)\|_{X_Q}\lesssim \|f\|_{BMO},$$
which implies
$$\|f\|_{\BMO_{MX}}\lesssim \|f\|_{\BMO}.$$
Combining this with Theorem \ref{mrfull}, we obtain the following surprising self-improvement corollary.

\begin{cor}
Let $X = \cbrace{X_Q}$ be a family of normalized quasi-Banach function spaces. The following statements are equivalent:
\begin{enumerate}[(i)]
\item $\BMO \hookrightarrow \BMO_X$.
\item $\BMO = \BMO_{MX}$.
\end{enumerate}
\end{cor}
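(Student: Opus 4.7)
The plan is to observe that this corollary is essentially a repackaging of the discussion that immediately precedes it, combined with Theorem \ref{mrfull}. Two trivial inequalities are the starting point: since $Q \in \mc{D}(Q)$ and $\nrm{\chi_Q}_{X_Q} \simeq 1$, one has $\ip{|f-\ip{f}_Q|}_Q \lesssim \nrm{M_Q(f-\ip{f}_Q)}_{X_Q}$ for every cube $Q$, and consequently $\nrm{f}_{\BMO} \lesssim \nrm{f}_{\BMO_{MX}}$; moreover, since $M_Q g \geq |g|$ a.e.\ on $Q$ by the Lebesgue differentiation theorem, the compatibility axiom of the quasi-Banach function space $X_Q$ yields $\nrm{f}_{\BMO_X} \leq \nrm{f}_{\BMO_{MX}}$. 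Both of these comparisons hold for every family $X$, with no hypothesis needed.

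For the implication \emph{(i)} $\Rightarrow$ \emph{(ii)}, I would invoke Theorem \ref{mrfull}, which under the assumption $\BMO \hookrightarrow \BMO_X$ gives the sparse bound $\nrm{\sum_{P \in \mc{F}} \chi_P}_{X_Q} \lesssim 1$ uniformly over all cubes $Q \in \mc{Q}$ and all $\eta$-sparse families $\mc{F} \subset \mc{D}(Q)$. Applying the strengthening of Proposition \ref{spes} recorded in \cite[Remark 5.6]{LLO21}, which produces such an $\mc{F}$ dominating the \emph{dyadic maximal} oscillation pointwise, we obtain for any $f \in \BMO$
\begin{equation*}
M_Q(f-\ip{f}_Q)(x) \lesssim \nrm{f}_{\BMO} \sum_{P \in \mc{F}} \chi_P(x) \qquad \text{a.e. } x \in Q.
\end{equation*}
Taking the $X_Q$-norm and using the sparse bound yields $\nrm{M_Q(f-\ip{f}_Q)}_{X_Q} \lesssim \nrm{f}_{\BMO}$, and supremizing over $Q$ gives $\nrm{f}_{\BMO_{MX}} \lesssim \nrm{f}_{\BMO}$. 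Combined with the reverse trivial inequality above, this delivers the norm equivalence, hence the equality $\BMO = \BMO_{MX}$ of \emph{(ii)}.

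For \emph{(ii)} $\Rightarrow$ \emph{(i)}, the equality $\BMO = \BMO_{MX}$ is to be understood as coincidence of the underlying Banach spaces, so the closed graph theorem (applied to the continuous inclusion $\BMO_{MX} \hookrightarrow \BMO$ obtained from the first trivial inequality, whose inverse is assumed to exist) promotes it to the norm equivalence $\nrm{f}_{\BMO_{MX}} \simeq \nrm{f}_{\BMO}$. Chaining this with the second trivial inequality $\nrm{f}_{\BMO_X} \leq \nrm{f}_{\BMO_{MX}}$ yields $\nrm{f}_{\BMO_X} \lesssim \nrm{f}_{\BMO}$, i.e.\ the embedding \emph{(i)}. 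There is no real obstacle here: the only subtlety is the appeal to \cite[Remark 5.6]{LLO21} in the forward direction, which is the "stronger" sparse domination that makes the self-improvement to $M_Q$ essentially cost-free once the sparse bound on $X_Q$ is in hand.
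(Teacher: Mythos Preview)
Your argument follows the paper's approach exactly for the direction \emph{(i)} $\Rightarrow$ \emph{(ii)}: Theorem~\ref{mrfull} gives the sparse bound, the strengthened sparse domination from \cite[Remark 5.6]{LLO21} controls $M_Q(f-\ip{f}_Q)$, and the trivial inequality $\nrm{f}_{\BMO}\lesssim\nrm{f}_{\BMO_{MX}}$ closes the loop.

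For \emph{(ii)} $\Rightarrow$ \emph{(i)} your route via the closed graph theorem is both unnecessary and not quite justified. In this paper the equality $\BMO=\BMO_{MX}$ is understood as equivalence of (semi)norms, so the implication is immediate from $\nrm{f}_{\BMO_X}\le\nrm{f}_{\BMO_{MX}}\simeq\nrm{f}_{\BMO}$, with no functional-analytic machinery required. If instead you insist on reading \emph{(ii)} as mere set equality, the closed graph theorem needs $\BMO_{MX}$ to be complete, which you have not verified and which is not obvious for a general family of \emph{quasi}-Banach function spaces $\{X_Q\}$; the paper never claims or proves this. So either drop the closed graph step and read \emph{(ii)} as norm equivalence (as the paper does), or supply the missing completeness argument.
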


\subsection{The embedding \texorpdfstring{$\BMO_X^* \hookrightarrow \BMO$}{BMOX -> BMO}}We now turn to the converse embedding, for which it is, as discussed in the introduction, more natural to work with $\BMO_X^*$. To help with our future discussions, let us first introduce notation for the assumption in Theorem \ref{jscor}\ref{it:jscor:1}.

\begin{definition}\label{Adelta} Let $X=\{X_Q\}$ be a family of normalized quasi-Banach function spaces, and let $0<\d<1$. We say that $X$ satisfies the $\A{\d}$-condition if for every
cube $Q\in \mc{Q}$ and any measurable subset $E\subset Q$ with $|E|\ge \d|Q|$, we have
$ \|\chi_E\|_{X_Q}\gtrsim 1.$
\end{definition}

Note that the assumption in Theorem \ref{jscor}\ref{it:jscor:1} is exactly the $\A{1/2}$-condition. The $\A{\d}$-condition gets stronger as~$\d$ decreases. In general, the $\A{\d}$-condition does not necessarily imply the $\A{\varepsilon}$-condition for $\varepsilon<\d$. We will give an example of this in Subsection \ref{vlp}.

The sufficiency of the assumption in Theorem \ref{jscor}\ref{it:jscor:1} for the embedding $\BMO_X^* \hookrightarrow \BMO$ is an easy consequence of the John--Str\"omberg theorem \cite{J65, S79}.

\begin{theorem}\label{jscor1} Let $X=\{X_Q\}$ be a family of normalized quasi-Banach function spaces satisfying the $\A{1/2}$-condition. Then
$\BMO_X^* \hookrightarrow \BMO$.
\end{theorem}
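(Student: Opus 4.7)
The plan is to reduce the embedding to a John--Strömberg-type distributional estimate at the critical measure level $\frac{1}{2}\abs{Q}$ and then invoke the John--Strömberg theorem \cite{J65, S79} to conclude.

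To set up the estimate, fix $f \in \BMO_X^*$ and an arbitrary cube $Q \in \mc{Q}$. For each $\varepsilon > 0$ choose a scalar $c_Q = c_{Q,\varepsilon}$ with $\nrm{f - c_Q}_{X_Q} \leq (1+\varepsilon)\nrm{f}_{\BMO_X^*}$, and for $\alpha > 0$ set $E_\alpha := \{x \in Q : \abs{f(x) - c_Q} > \alpha\}$. The pointwise majorization $\alpha\chi_{E_\alpha} \leq \abs{f - c_Q}\chi_Q$ combined with the lattice property of the quasi-norm $\nrm{\,\cdot\,}_{X_Q}$ gives
\begin{equation*}
\alpha\,\nrm{\chi_{E_\alpha}}_{X_Q} \leq \nrm{f - c_Q}_{X_Q} \leq (1+\varepsilon)\nrm{f}_{\BMO_X^*}.
\end{equation*}

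Next we convert this into a measure estimate using the $\A{1/2}$-condition. By hypothesis there exists a constant $c_0 > 0$ (independent of $Q$) such that $\nrm{\chi_E}_{X_Q} \geq c_0$ whenever $E \subset Q$ satisfies $\abs{E} \geq \frac{1}{2}\abs{Q}$. The contrapositive of the preceding display then forces $\abs{E_\alpha} < \frac{1}{2}\abs{Q}$ as soon as $\alpha > (1+\varepsilon)\nrm{f}_{\BMO_X^*}/c_0$. Letting $\varepsilon \downarrow 0$ and setting $t_Q := 2\nrm{f}_{\BMO_X^*}/c_0$, we arrive at
\begin{equation*}
\abs{\{x \in Q : \abs{f(x) - c_Q} > t_Q\}} < \tfrac{1}{2}\abs{Q} \qquad \text{for every cube } Q.
\end{equation*}

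Finally we invoke the John--Strömberg theorem in the following standard form: if a measurable function $f$ admits, for every cube $Q$, a scalar $c_Q$ and a threshold $t_Q$ satisfying the strict distributional bound above with $T := \sup_Q t_Q < \infty$, then $f \in \BMO$ with $\nrm{f}_{\BMO} \lesssim T$. Applied to our $t_Q = 2\nrm{f}_{\BMO_X^*}/c_0$, this yields $\nrm{f}_{\BMO} \lesssim \nrm{f}_{\BMO_X^*}$, which is the claimed embedding. The first two reductions are routine consequences of the ideal property and the $\A{1/2}$-assumption; the genuinely nontrivial input — and hence the main step where care is needed — is the John--Strömberg theorem at the critical index $\lambda = 1/2$ (with strict inequality in the distributional bound), which is precisely the content of John's original 1965 argument and is available as a classical result.
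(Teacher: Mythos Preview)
Your proof is correct and follows essentially the same route as the paper: both arguments use the $\A{1/2}$-condition together with the lattice property of $\nrm{\,\cdot\,}_{X_Q}$ to bound $\inf_c((f-c)\chi_Q)^*(|Q|/2)$ by a constant multiple of $\nrm{f}_{\BMO_X^*}$, and then appeal to the John--Str\"omberg theorem. The paper phrases this directly in terms of the rearrangement, while you phrase it via the distribution function and a contrapositive, but the content is the same; one cosmetic remark is that the ``letting $\varepsilon\downarrow 0$'' step is unnecessary (your $c_Q$ depends on $\varepsilon$, so you may as well fix $\varepsilon=1$ from the start), though this does not affect correctness.
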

\begin{proof} Let $Q \in \mc{Q}$. By the John--Str\"omberg theorem we have
\begin{equation}\label{js}
\|f\|_{\BMO}\lesssim \sup_Q\inf_c((f-c)\chi_Q)^*(|Q|/2).
\end{equation}
For a fixed scalar $c$, consider the set
$$E=\{x\in Q:|f(x)-c|\ge ((f-c)\chi_Q)^*(|Q|/2)\}.$$
Then $|E|\ge |Q|/2$. Therefore, by the $\A{1/2}$-condition, we have
$$((f-c)\chi_Q)^*(|Q|/2)\}\lesssim \|(f-c)\chi_E\|_{X_Q}\le \|f-c\|_{X_Q}.$$
Taking the infimum over $c$, the supremum over $Q$ and using \eqref{js} completes the proof.
\end{proof}

\begin{remark}\label{rem3}
An inspection of the proof of Theorem \ref{jscor1} shows that we actually used only two assumptions on $X=\{X_Q\}$: compatibility and the $\A{1/2}$-condition.
That is, we do not need completeness or quasi-normability in order to deduce the embedding $\BMO_X^* \hookrightarrow \BMO$.
\end{remark}

In order to prove the sufficiency of the assumption in Theorem \ref{jscor}\ref{it:jscor:2} for the embedding a characterization of the slightly weaker embedding $\BMO_X^* \hookrightarrow \BMO$, we will first give a characterization of the slightly weaker embedding $\BMO_X \hookrightarrow \BMO$ in terms of the boundedness of the mapping $f \mapsto \abs{f}$ on $\BMO_X$.

\begin{prop}\label{prop:charembed}
  Let $X = \cbrace{X_Q}$ be a family of normalized quasi-Banach function spaces. The following statements are equivalent:
  \begin{enumerate}[(i)]
    \item\label{it:charembed:1} $\BMO_X \hookrightarrow \BMO$.
    \item\label{it:charembed:2} For $f \in \BMO_X$ we have
    $
    \nrmb{\abs{f}}_{\BMO_X} \lesssim \nrm{f}_{\BMO_X}.
    $
  \end{enumerate}
\end{prop}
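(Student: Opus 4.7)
Both implications rest on the elementary decomposition
\[
|f| - \ip{|f|}_Q = \bigl(|f| - |\ip{f}_Q|\bigr) + \bigl(|\ip{f}_Q| - \ip{|f|}_Q\bigr),
\]
in which the first summand is pointwise dominated by $|f - \ip{f}_Q|$, while the second is a constant satisfying $\absb{\ip{|f|}_Q - |\ip{f}_Q|} \leq \ip{|f - \ip{f}_Q|}_Q$ via the pointwise reverse triangle inequality integrated over $Q$.

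For \ref{it:charembed:1}$\Rightarrow$\ref{it:charembed:2}, I would fix $f \in \BMO_X$ and a cube $Q$ and estimate both summands separately in $X_Q$. Compatibility yields $\nrm{|f| - |\ip{f}_Q|}_{X_Q} \leq \nrm{f - \ip{f}_Q}_{X_Q} \leq \nrm{f}_{\BMO_X}$, whereas the normalization $\nrm{\chi_Q}_{X_Q}\simeq 1$ together with hypothesis \ref{it:charembed:1} gives
\[
\nrmb{(\ip{|f|}_Q - |\ip{f}_Q|)\chi_Q}_{X_Q} \simeq \absb{\ip{|f|}_Q - |\ip{f}_Q|} \leq \ip{|f - \ip{f}_Q|}_Q \leq \nrm{f}_{\BMO} \lesssim \nrm{f}_{\BMO_X}.
\]
The quasi-triangle inequality on $X_Q$ and a supremum over $Q$ complete this direction.

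For \ref{it:charembed:2}$\Rightarrow$\ref{it:charembed:1}, the key is the zero-mean identity $\ip{|g|}_Q = 2\ip{g_+}_Q$, valid for any $g$ with $\ip{g}_Q = 0$, where $g_+ := \tfrac{1}{2}(|g|+g)$. Given $f \in \BMO_X$ and a cube $Q$, I would set $g := f - \ip{f}_Q$; this constant shift leaves the $\BMO_X$-seminorm unchanged, so \ref{it:charembed:2} applied to $g$ yields $\nrm{|g| - \ip{|g|}_Q}_{X_Q} \leq \nrm{|g|}_{\BMO_X} \lesssim \nrm{f}_{\BMO_X}$. Compatibility gives $\nrm{g_+}_{X_Q} \leq \nrm{g}_{X_Q} \leq \nrm{f}_{\BMO_X}$, and the identity
\[
g_+ - \ip{g_+}_Q = \tfrac{1}{2}\bigl((|g| - \ip{|g|}_Q) + g\bigr)
\]
bounds the $X_Q$-oscillation of $g_+$ by $\nrm{f}_{\BMO_X}$ as well. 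Combining these via the quasi-triangle step
\[
\ip{g_+}_Q \simeq \nrmb{\ip{g_+}_Q \chi_Q}_{X_Q} \lesssim \nrm{g_+}_{X_Q} + \nrmb{g_+ - \ip{g_+}_Q \chi_Q}_{X_Q}
\]
and doubling produces $\ip{|f - \ip{f}_Q|}_Q = \ip{|g|}_Q \lesssim \nrm{f}_{\BMO_X}$; taking the supremum over $Q$ delivers \ref{it:charembed:1}.

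\textbf{Main obstacle.} The only nonroutine ingredient is the zero-mean identity $\ip{|g|}_Q = 2\ip{g_+}_Q$, which transmutes the \emph{centered} $X_Q$-oscillation bound on $|g|$ provided by \ref{it:charembed:2} into an absolute bound on the mean of $|g|$ itself, thereby recovering $L^1$-information from the \emph{a priori} weaker $\BMO_X$-control; all remaining steps are bookkeeping that absorbs the quasi-norm triangle constant into $\lesssim$.
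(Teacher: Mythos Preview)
Your proof is correct, and the direction \ref{it:charembed:1}$\Rightarrow$\ref{it:charembed:2} is essentially identical to the paper's argument.

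For \ref{it:charembed:2}$\Rightarrow$\ref{it:charembed:1}, however, you take an unnecessary detour. The paper applies your ``quasi-triangle step'' directly to $|g|$ rather than to $g_+$: since $\nrm{\chi_Q}_{X_Q}\simeq 1$, one has
\[
\ip{|g|}_Q \simeq \nrmb{\ip{|g|}_Q\,\chi_Q}_{X_Q} \lesssim \nrmb{|g|-\ip{|g|}_Q}_{X_Q} + \nrm{|g|}_{X_Q}
= \nrmb{|g|-\ip{|g|}_Q}_{X_Q} + \nrm{g}_{X_Q},
\]
and both terms on the right are already bounded by $\nrm{f}_{\BMO_X}$ via the estimates you established (namely $\nrm{g}_{X_Q}\le\nrm{f}_{\BMO_X}$ and $\nrmb{|g|-\ip{|g|}_Q}_{X_Q}\le\nrm{|g|}_{\BMO_X}\lesssim\nrm{g}_{\BMO_X}=\nrm{f}_{\BMO_X}$). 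Thus the zero-mean identity $\ip{|g|}_Q=2\ip{g_+}_Q$ that you flag as the ``main obstacle'' is in fact not needed at all: the same triangle-plus-normalization trick you apply to $g_+$ works verbatim on $|g|$, shortening the argument by half.
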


\begin{proof}
  Fix $f \in \BMO_X$. First assume that \ref{it:charembed:1} holds. Then we have for any $Q \in \mc{Q}$
  \begin{align*}
    \nrmb{\abs{f} - \ip{\abs{f}}_Q}_{X_Q} &\leq \nrms{\frac{1}{\abs{Q}} \int_Q \abs{f - f(x)}\dd x}_{X_Q}\\&\lesssim \nrmb{\abs{f - \ip{f}_Q}}_{X_Q}+\nrm{\chi_Q}_{X_Q} \nrm{f}_{\BMO}\lesssim \nrm{f}_{\BMO_X}
  \end{align*}
  Taking the supremum over all $Q \in \mc{Q}$ yields \ref{it:charembed:2}. Conversely, assume that \ref{it:charembed:2} holds and fix $Q \in \mc{Q}$. For any $g \in L^1_{\loc}(\R^n)$ we have
  \begin{align*}
  \ip{\abs{g}}_Q &\lesssim \ip{\abs{g}}_Q \nrm{\chi_Q}_{X_Q} \lesssim  \nrmb{\abs{g}-\ip{\abs{g}}_Q}_{X_Q} + \nrm{g}_{X_Q}
  \end{align*}
  Taking $g = {f - \ip{f}_Q}$, we obtain
  \begin{align*}
    \frac{1}{\abs{Q}}\int_Q\abs{f - \ip{f}_Q} &\lesssim \nrmb{\abs{g}-\ip{\abs{g}}_Q}_{X_Q} + \nrm{{f - \ip{f}_Q}}_{X_Q}\\
    &\lesssim \nrm{g}_{\BMO_X} + \nrm{f}_{\BMO_X}\leq 2\,\nrm{f}_{\BMO_X}.
  \end{align*}
  Taking the supremum over all $Q \in \mc{Q}$ yields \ref{it:charembed:1}, finishing the proof.
\end{proof}

It is easy to show that the mapping $f \mapsto \abs{f}$ is bounded on $\BMO_X^*$ for any family of normalized quasi-Banach function spaces $X = \cbrace{X_Q}$. Therefore, when $\BMO_X=\BMO_X^*$, we can deduce that $\BMO_X^* \hookrightarrow \BMO$ from Proposition \ref{prop:charembed}, finishing the proof of Theorem \ref{jscor}.
\begin{cor}\label{necbmo}
Let $X = \cbrace{X_Q}$ be a family of normalized quasi-Banach function spaces. If $\BMO_X=\BMO_X^*$, then $\BMO_X^*\hookrightarrow \BMO$.
\end{cor}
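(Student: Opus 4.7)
My plan is to follow the roadmap outlined right before the corollary: first establish that taking absolute values is bounded on $\BMO_X^*$ with no extra assumptions, then transfer this boundedness to $\BMO_X$ using $\BMO_X = \BMO_X^*$, and finally invoke Proposition~\ref{prop:charembed} to obtain $\BMO_X \hookrightarrow \BMO$, which combined with the standing hypothesis upgrades to $\BMO_X^* \hookrightarrow \BMO$.

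The first step is a pointwise reverse triangle inequality combined with compatibility. For any cube $Q \in \mc{Q}$, any $f \in \BMO_X^*$ and any scalar $c \in \R$, the identity $\bigl|\,\abs{f(x)} - \abs{c}\,\bigr| \leq \abs{f(x)-c}$ holds pointwise, so compatibility of $X_Q$ yields
\begin{equation*}
\nrmb{\abs{f} - \abs{c}}_{X_Q} \leq \nrm{f-c}_{X_Q}.
\end{equation*}
Taking infimum over $c$ on both sides (noting that $\abs{c}$ ranges over $[0,\infty) \subset \R$ as $c$ varies), then supremum over $Q$, gives $\nrmb{\abs{f}}_{\BMO_X^*} \leq \nrm{f}_{\BMO_X^*}$.

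For the second step I need to pass from $\BMO_X^*$ to $\BMO_X$. Directly from the definitions one has the trivial continuous inclusion $\BMO_X \hookrightarrow \BMO_X^*$ with $\nrm{f}_{\BMO_X^*} \leq \nrm{f}_{\BMO_X}$ (take $c = \ip{f}_Q$). Under the hypothesis $\BMO_X = \BMO_X^*$ as sets, the identity map between these quasi-Banach spaces (modulo constants) is a bounded bijection, so the open mapping / closed graph theorem for F-spaces forces the two (quasi-)norms to be equivalent. Consequently, for $f \in \BMO_X$,
\begin{equation*}
\nrmb{\abs{f}}_{\BMO_X} \simeq \nrmb{\abs{f}}_{\BMO_X^*} \leq \nrm{f}_{\BMO_X^*} \simeq \nrm{f}_{\BMO_X}.
\end{equation*}
That is, $f \mapsto \abs{f}$ is bounded on $\BMO_X$, which is exactly condition \ref{it:charembed:2} of Proposition~\ref{prop:charembed}. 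Hence $\BMO_X \hookrightarrow \BMO$, and using $\BMO_X^* = \BMO_X$ once more concludes $\BMO_X^* \hookrightarrow \BMO$.

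I do not expect a real obstacle here: the absolute-value estimate is immediate from compatibility, and Proposition~\ref{prop:charembed} does all the work of converting $\abs{\,\cdot\,}$-boundedness into the target embedding. The only mildly delicate point is the appeal to the open mapping theorem to turn the set-theoretic equality $\BMO_X = \BMO_X^*$ into a norm equivalence; this is where completeness of the underlying quasi-Banach function spaces $X_Q$ (and hence of $\BMO_X$ and $\BMO_X^*$ modulo constants) is used in an essential way.
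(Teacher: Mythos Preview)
Your proof is correct and follows the same overall route as the paper: show that $f\mapsto|f|$ is bounded on $\BMO_X^*$, transfer this to $\BMO_X$ via the hypothesis, and invoke Proposition~\ref{prop:charembed}. Your argument for the first step is actually simpler than the paper's: the paper passes through a product norm $\nrm{(x,y)\mapsto f(x)-f(y)}_{X_Q\times X_Q}$ and the pointwise bound $\bigl|\,|f(x)|-|f(y)|\,\bigr|\le |f(x)-f(y)|$, whereas you observe directly that $\bigl|\,|f|-|c|\,\bigr|\le|f-c|$ already yields $\inf_{c'}\nrm{|f|-c'}_{X_Q}\le \inf_c\nrm{f-c}_{X_Q}$ without introducing any auxiliary space.

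One remark on your second step: the paper reads the hypothesis ``$\BMO_X=\BMO_X^*$'' as equality with equivalent quasi-norms, so it does not appeal to the open mapping theorem. Your extra care is reasonable, but it relies on completeness of $\BMO_X$ and $\BMO_X^*$ modulo constants, which is plausible yet not established anywhere in the paper and is not entirely trivial (one must glue the local limits in each $X_Q$ into a single function). It is cleaner here simply to take the norm equivalence as part of the hypothesis, as the paper does.
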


\begin{proof}
For $f \in \BMO_X^*$ and any $Q \in \mc{Q}$ we have
\begin{align*}
  \inf_c \,\nrmb{\abs{f}-c}_{X_Q} &\lesssim \nrmb{(x,y) \mapsto \abs{f(x)}-\abs{f(y)}}_{X_Q \times X_Q} \\&\leq \nrm{(x,y) \mapsto f(x)-f(y)}_{X_Q \times X_Q} \\ &\lesssim \inf_c \,\nrm{f-c}_{X_Q},
\end{align*}
so the mapping $f \mapsto \abs{f}$ is bounded on $\BMO_X^* = \BMO_X$. The corollary now follows from Proposition \ref{prop:charembed}.
\end{proof}

\section{Sufficient conditions}\label{sec:suf}
The $\A{1/2}$-assumption in Theorem \ref{jscor1} for the embedding $\BMO_X^*\hookrightarrow \BMO$ is rather weak and easily checked in concrete situations. The equivalent conditions in Theorem \ref{mrfull} for the embedding $\BMO\hookrightarrow \BMO_X$ are a bit more involved.
In this section we will explore some efficient and easy to use sufficient conditions that imply the conditions in Theorem \ref{mrfull} and thus yield the embedding $\BMO\hookrightarrow \BMO_X$.

\begin{definition}
   Given a family of normalized quasi-Banach function spaces $X=\{X_Q\}$, define the function $\psi_X\colon (0,1) \to \R_+$ by
$$\psi_X(t):=\sup_{Q \in \mc{Q}}\sup_{E\subset Q:|E|\le t|Q|}\|\chi_E\|_{X_Q}\qquad t \in (0,1).$$
\end{definition}

We can use the function $\psi_X$ to check the condition in Theorem \ref{mrfull}\ref{it:3}.

\begin{prop}\label{upb} Let $X=\{X_Q\}$ be a family of normalized Banach function spaces. Then
\begin{equation*}
\|f\|_{\BMO_X}\lesssim {\int_0^1\psi_X(t)\frac{dt}{t}}\cdot\|f\|_{\BMO}.
\end{equation*}
\end{prop}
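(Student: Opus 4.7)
The plan is to combine the sparse domination from Proposition~\ref{spes} with the triangle inequality in $X_Q$, which is where the hypothesis that each $X_Q$ is a Banach (and not merely quasi-Banach) function space is used, and then trade a dyadic sum for the integral $\int_0^1\psi_X(t)\frac{dt}{t}$ by a monotonicity argument. The structure mirrors the implication \ref{it:2}$\Rightarrow$\ref{it:1} in the proof of Theorem~\ref{mrfull}: sparse domination reduces the control of $\|f-\langle f\rangle_Q\|_{X_Q}$ to estimating the $X_Q$-norm of a sum of indicator functions of nested sets of geometrically decaying measure, and $\psi_X$ is tailor-made to control each such indicator.

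Concretely, I would fix $Q\in\mc Q$ and $f\in\BMO$ and apply Proposition~\ref{spes} with $\eta=\tfrac12$ to produce a $\tfrac12$-sparse family $\mc F\subset\mc D(Q)$ satisfying
$$
|f-\langle f\rangle_Q|\chi_Q\lesssim\|f\|_{\BMO}\sum_{P\in\mc F}\chi_P\qquad\text{a.e.\ on }Q.
$$
Invoking the layered decomposition $\mc F=\bigcup_{k\ge 0}\mc F_k$ from Subsection~\ref{subsec:sparse}, we can rewrite $\sum_{P\in\mc F}\chi_P=\sum_{k\ge 0}\chi_{\Omega_k}$ for nested $\Omega_k\subset Q$ with $|\Omega_k|\le 2^{-k}|Q|$ by \eqref{omegak}. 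Applying the triangle inequality in the Banach space $X_Q$ to the infinite series, and using $\|\chi_{\Omega_k}\|_{X_Q}\le\psi_X(2^{-k})$ for $k\ge 1$ together with $\|\chi_{\Omega_0}\|_{X_Q}\le\|\chi_Q\|_{X_Q}\simeq 1$, then yields
$$
\|f-\langle f\rangle_Q\|_{X_Q}\lesssim\|f\|_{\BMO}\Big(1+\sum_{k=1}^\infty\psi_X(2^{-k})\Big).
$$

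To conclude, I would convert the sum into the integral: since $\psi_X$ is non-decreasing on $(0,1)$, for $t\in[2^{-k},2^{-k+1}]$ we have $\psi_X(2^{-k})\le\psi_X(t)$, so
$$
\psi_X(2^{-k})\log 2\le\int_{2^{-k}}^{2^{-k+1}}\psi_X(t)\frac{dt}{t},
$$
and summing over $k\ge 1$ telescopes to $\sum_{k\ge 1}\psi_X(2^{-k})\lesssim\int_0^1\psi_X(t)\frac{dt}{t}$. Taking the supremum over $Q\in\mc Q$ then produces the claimed estimate. I do not anticipate a serious obstacle here; the one mildly delicate point, and the only place the argument is not purely mechanical, is the $k=0$ contribution. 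It has no natural bound in terms of $\psi_X$ and must be handled by hand via $\|\chi_Q\|_{X_Q}\simeq 1$, then absorbed into the statement's ``$\lesssim$'' under the (harmless) understanding that $\int_0^1\psi_X(t)\frac{dt}{t}\gtrsim 1$ whenever the right-hand side is finite and nontrivial. The Banach (rather than quasi-Banach) hypothesis is what makes the triangle inequality on the infinite series legal with a uniform constant; in a genuine quasi-Banach setting one would have to work harder, for instance by exploiting a $p$-concavity-type improvement.
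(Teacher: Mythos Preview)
Your proposal is correct and follows essentially the same route as the paper: reduce to bounding $\bigl\|\sum_{k\ge 0}\chi_{\Omega_k}\bigr\|_{X_Q}$ for nested $\Omega_k$ with $|\Omega_k|\le 2^{-k}|Q|$, apply the triangle inequality termwise, and compare the dyadic sum $\sum_k\psi_X(2^{-k})$ with the integral $\int_0^1\psi_X(t)\frac{dt}{t}$ via monotonicity. The only notable difference is cosmetic: the paper packages the first step as ``verify condition~\ref{it:3} of Theorem~\ref{mrfull}'' rather than unwrapping the sparse domination explicitly, and it handles the $k=0$ term more cleanly by observing from the definition that $\psi_X(t)\le 2\psi_X(t/2)$ (split any $E$ with $|E|\le t|Q|$ into two halves), which gives $\sum_{k\ge 0}\psi_X(2^{-k})\le 3\sum_{k\ge 1}\psi_X(2^{-k})$ and avoids your separate absorption argument entirely.
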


 A similar statement can be proven for quasi-Banach function spaces, using the Aoki--Rolewicz theorem (see \cite{KPR84}).

\begin{proof} Let $Q$ be an arbitrary cube and let $\O_k\subset Q$  be a sequence of nested sets  satisfying $|\O_k|\le 2^{-k}|Q|$. Then we have
$$\Big\|\sum_{k=0}^{\infty}\chi_{\Omega_k}\Big\|_{X_Q}\le \sum_{k=0}^{\infty}\|\chi_{\Omega_k}\|_{X_Q}\le \sum_{k=0}^{\infty}\psi_X(2^{-k}).$$
It follows from the definition of $\psi_X$ that
$\psi_X(t)\le 2\,\psi_X(t/2).$
Using also that $\psi_X$ is monotone, we obtain
$$\sum_{k=0}^{\infty}\psi_X(2^{-k})\le 3\sum_{k=1}^{\infty}\psi_X(2^{-k})\leq 3 \int_0^1\psi_X(t)\frac{dt}{t}.$$
Combined with the previous estimates, this yields the condition in Theorem \ref{mrfull}\ref{it:3} and thus completes the proof.
\end{proof}

Proposition \ref{upb} states that
$$\int_0^1\psi_X(t)\frac{dt}{t}<\infty\qquad \Rightarrow \qquad \BMO\hookrightarrow \BMO_X.$$
The converse is implication is false. Indeed, for $X=\{\exp L(Q)\}$ the classical John--Nirenberg inequality states $\BMO\hookrightarrow \BMO_X$, but $\psi_X(t)\simeq \log(e/t)^{-1}$ and thus
$\int_0^1\psi_X(t)\frac{\ddn t}{t}= \infty$.

It turns out that the decay of $\psi_X$ for $X=\{\exp L(Q)\}$ is extremal in the following sense:

\begin{lemma}\label{converse} Let $X=\{X_Q\}$ be a family of normalized quasi-Banach function spaces.
If $\BMO\hookrightarrow \BMO_X$, then
$$\psi_X(t)\lesssim \log(e/t)^{-1}\qquad t \in (0,1).$$
\end{lemma}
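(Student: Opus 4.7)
The plan is to reduce the statement to Theorem \ref{mrfull}\ref{it:4} and then compute the $\exp L(Q)$-norm of a characteristic function explicitly.

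First I would invoke the equivalence \ref{it:1}$\Leftrightarrow$\ref{it:4} in Theorem \ref{mrfull}: the hypothesis $\BMO \hookrightarrow \BMO_X$ yields a constant $C>0$ such that for every cube $Q \in \mc{Q}$ and every $g \in \exp L(Q)$,
$$
\nrm{g}_{X_Q} \leq C\, \nrm{g}_{\exp L(Q)}.
$$
Thus it suffices to bound $\nrm{\chi_E}_{\exp L(Q)}$ for every measurable $E \subset Q$ with $|E|\le t|Q|$.

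Next I would evaluate the normalized Luxemburg functional of $\chi_E$ directly. Since $\varphi(s)=\ee^s-1$, for $\alpha>0$ we have
$$
\frac{1}{|Q|}\int_Q \varphi\has{\frac{\chi_E}{\alpha}} \dd x = \frac{|E|}{|Q|}\has{\ee^{1/\alpha}-1}.
$$
Setting this quantity equal to $1$ and solving gives
$$
\nrm{\chi_E}_{\exp L(Q)} = \frac{1}{\log\has{1+\frac{|Q|}{|E|}}}.
$$
Because $|E|\le t|Q|$, this is at most $1/\log(1+1/t)$, which for $t \in (0,1)$ is comparable to $1/\log(e/t)$ (both behave like $-1/\log t$ as $t\to 0^+$ and are bounded away from $0$ and $\infty$ as $t\to 1^-$).

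Combining the two displays, $\nrm{\chi_E}_{X_Q} \lesssim \log(e/t)^{-1}$ uniformly in $Q$ and in $E \subset Q$ with $|E|\le t|Q|$. Taking the supremum over such $E$ and $Q$ yields $\psi_X(t) \lesssim \log(e/t)^{-1}$. There is no real obstacle here; the only thing to be careful about is the elementary comparison $\log(1+1/t) \simeq \log(e/t)$ on $(0,1)$, which just needs a short case analysis for $t$ near $0$ and $t$ near $1$.
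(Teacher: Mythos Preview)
Your proof is correct and takes a genuinely different route from the paper's. The paper argues directly from the embedding $\BMO\hookrightarrow\BMO_X$: given $E\subset Q$, it builds the explicit function $g:=\max\{\log(\tfrac{|Q|}{|E|}M\chi_E),0\}$, uses Coifman--Rochberg (Proposition~\ref{cr}) to get $\|g\|_{\BMO}\lesssim 1$, Kolmogorov's inequality to bound $\langle g\rangle_Q$, and the pointwise lower bound $g\ge\log\frac{|Q|}{|E|}$ on $E$ to conclude $\log\frac{|Q|}{|E|}\,\|\chi_E\|_{X_Q}\lesssim 1$. You instead invoke the already-established equivalence \ref{it:1}$\Leftrightarrow$\ref{it:4} of Theorem~\ref{mrfull} and simply compute $\|\chi_E\|_{\exp L(Q)}$ exactly. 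Your approach is shorter and more modular, since it recognizes the lemma as an immediate corollary of Theorem~\ref{mrfull}; the paper's argument is more self-contained (it does not appeal back to Theorem~\ref{mrfull}) but is in essence a specialization of the \ref{it:1}$\Rightarrow$\ref{it:4} step to the function~$\chi_E$.
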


\begin{proof}
Given $E\subset Q$, define
$$g:=\max\cbraces{\log\has{\frac{|Q|}{|E|}M\chi_E},0}.$$
Then, by Proposition \ref{cr}, $\|g\|_{\BMO}\lesssim 1$. Moreover, $g\ge \log\frac{|Q|}{|E|}$ for a.e. $x\in E$ and, by Kolmogorov's inequality,
$$\int_Qg\lesssim \has{\frac{|Q|}{|E|}}^{1/2}\int_Q(M\chi_E)^{1/2}\lesssim |Q|.$$
Combining these properties yields
\begin{align*}
\log\frac{|Q|}{|E|}\|\chi_E\|_{X_Q}\leq \|g\|_{X_Q}&\lesssim \|g-\langle g\rangle_Q\|_{X_Q}+\langle g\rangle_Q
\lesssim 1,
\end{align*}
from which the lemma follows.
\end{proof}

Lemma \ref{converse},  in particular, shows that if $\BMO\hookrightarrow \BMO_X$, then  $\lim_{t\to 0}\psi_X(t)=0.$ This in turn implies the $\A{1-\varepsilon}$-condition for some $\varepsilon>0$, as we will show next.

\begin{cor}\label{cor:decay}
Let $X=\{X_Q\}$ be a family of normalized quasi-Banach function spaces such that
$\lim_{t\to 0}\psi_X(t)=0.$
Then there exists $\e>0$ such that $X$ satisfies the $\A{1-\e}$-condition.
\end{cor}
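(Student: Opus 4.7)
The plan is to exploit the trivial set-theoretic splitting $\chi_Q = \chi_E + \chi_{Q\setminus E}$ together with the normalization $\nrm{\chi_Q}_{X_Q}\simeq 1$. If $E\subset Q$ satisfies $|E|\geq (1-\e)|Q|$ then $F := Q\setminus E$ satisfies $|F|\leq \e|Q|$, so by definition of $\psi_X$ we have $\nrm{\chi_F}_{X_Q}\leq \psi_X(\e)$. Applying the quasi-triangle inequality of $X_Q$ (with constant $K\geq 1$ independent of $Q$, by the standing hypothesis on the family $X$) gives
\begin{equation*}
c \lesssim \nrm{\chi_Q}_{X_Q} \leq K\bigl(\nrm{\chi_E}_{X_Q} + \nrm{\chi_F}_{X_Q}\bigr) \leq K\bigl(\nrm{\chi_E}_{X_Q} + \psi_X(\e)\bigr),
\end{equation*}
where $c>0$ is the uniform lower bound implicit in $\nrm{\chi_Q}_{X_Q}\simeq 1$.

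Rearranging, $\nrm{\chi_E}_{X_Q} \geq \frac{c}{K} - \psi_X(\e)$. Now I invoke the hypothesis $\lim_{t\to 0}\psi_X(t)=0$ to choose, once and for all, some $\e>0$ small enough that $\psi_X(\e)\leq \frac{c}{2K}$. For that choice of $\e$, every $E\subset Q$ with $|E|\geq (1-\e)|Q|$ satisfies $\nrm{\chi_E}_{X_Q}\geq \frac{c}{2K}\gtrsim 1$ uniformly in $Q$, which is precisely the $\A{1-\e}$-condition.

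There is no real obstacle here; the proof is essentially an $\e$--$\delta$ chase driven by the quasi-triangle inequality. The only point requiring a moment's care is tracking the quasi-norm constant $K$: since the family $\{X_Q\}$ consists of quasi-Banach function spaces, one must assume (as is standard, and implicit in the paper's setup) that the quasi-triangle constant is uniform in $Q$, so that the threshold $\e$ produced by the argument does not depend on the cube. Given that, the conclusion is immediate.
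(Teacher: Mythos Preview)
Your proof is correct and follows exactly the same approach as the paper: split $\chi_Q=\chi_E+\chi_{Q\setminus E}$, bound $\nrm{\chi_{Q\setminus E}}_{X_Q}\le\psi_X(\e)$, and choose $\e$ small enough. In fact your version is slightly more careful than the paper's, since you explicitly track the quasi-triangle constant $K$ and the uniform lower bound $c$ for $\nrm{\chi_Q}_{X_Q}$, whereas the paper writes the estimate as if the triangle inequality held with constant~$1$.
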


\begin{proof}
Suppose that $E\subset Q$ with $|E|\ge (1-\e)|Q|$. Then
$$\nrm{\chi_Q}_{X_Q} \leq \|\chi_E\|_{X_Q}+\|\chi_{Q\setminus E}\|_{X_Q}\leq \|\chi_E\|_{X_Q}+\psi_X(\e).$$
Hence, taking $\e>0$ such that $\psi_X(\e) \leq \frac12\nrm{\chi_Q}_{X_Q}$, we obtain the $\A{1-\e}$ condition.
\end{proof}

Note that the $\A{1-\e}$-condition obtained in Corollary \ref{cor:decay}  is not enough to conclude $\BMO_{X}^* \hookrightarrow \BMO$ by Theorem \ref{jscor1}, as that would require the $\A{1/2}$-condition. However, the $\A{1-\e}$-condition is the best we can deduce from decay of $\psi_X$ as $t \to 0$, as we will show in Example \ref{ex}.

\bigskip

Let $X$ be a Banach function space and consider the family $X=\{X_Q\}$, where
$\|f\|_{X_Q}:=\frac{\|f\chi_Q\|_X}{\|\chi_Q\|_X}.$
As mentioned in the introduction, the previous works
\cite{H09, H12, I16, INS19} establish the embedding $\BMO \hookrightarrow \BMO_X$ under the assumption that~$M$ is bounded on $X'$. We end this section by showing that this also follows from Theorem \ref{mrfull}. Moreover, we also establish the embedding $\BMO \hookrightarrow \BMO_X$ under the assumption that $M$ is restricted weak type bounded on $X$, which in particular includes the result on variable Lebesgue spaces in \cite{IST14} (see Subsection \ref{vlp}).

\begin{prop}\label{prop:weakmax}
  Let $X$ be a Banach function space. Assume either of the following conditions:
  \begin{enumerate}[(i)]
  \item\label{itsufprop1} $M$ is bounded on $X'$.
  \item\label{itsufprop2}  $X$ is $q$-concave for some $q<\infty$ and $M$ is restricted weak type bounded on $X$, i.e. for any measurable subset $E \subset \R^n$ we have
  $$
  \sup_{\lambda>0} \lambda\, \nrm{\chi_{\cbrace{M\chi_E>\lambda}}}_X \lesssim \nrm{\chi_E}_X.
  $$
  \end{enumerate}
  Then we have $\BMO = \BMO_X = \BMO_X^*$.
\end{prop}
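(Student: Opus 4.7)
The plan is to handle the two cases separately, in each case verifying Theorem~\ref{mrfull}\ref{it:2} (for $\BMO\hookrightarrow\BMO_X$) together with either the $\A{1/2}$-condition or the identity $\BMO_X=\BMO_X^*$ (to obtain $\BMO_X^*\hookrightarrow\BMO$ via Theorem~\ref{jscor}).

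For case \ref{itsufprop1}, the sparse estimate proceeds by duality. Fix an $\eta$-sparse family $\mc{F}\subset\mc{D}(Q)$ and a nonnegative $g$ with $\nrm{g}_{X'}\le 1$. Using the disjointness of the sets $E_P$ together with the pointwise bound $\ip{g}_P\le Mg$ on $E_P\subset P$, I compute
\[
\int\sum_{P\in\mc{F}}\chi_P\cdot g = \sum_P|P|\ip{g}_P \le \frac{1}{\eta}\sum_P\int_{E_P}Mg \le \frac{1}{\eta}\int_Q Mg \le \frac{1}{\eta}\nrm{\chi_Q}_X\,\nrm{Mg}_{X'} \lesssim \nrm{\chi_Q}_X,
\]
by Hölder's inequality and the boundedness of $M$ on $X'$. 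Taking the supremum over $g$ via \eqref{eq:LL} verifies Theorem~\ref{mrfull}\ref{it:2}. To obtain $\BMO_X^*\hookrightarrow\BMO$, I will invoke Corollary~\ref{necbmo} by proving $\BMO_X=\BMO_X^*$. The key auxiliary bound is the $A_\infty$-type estimate $\nrm{\chi_Q}_X\nrm{\chi_Q}_{X'}\lesssim|Q|$: for $g\ge 0$ with $\nrm{g}_{X'}\le 1$, the pointwise estimate $\ip{g}_Q\chi_Q\le Mg$ and the boundedness of $M$ on $X'$ give $\ip{g}_Q\nrm{\chi_Q}_{X'}\le\nrm{Mg}_{X'}\lesssim 1$, hence $\int_Q g\lesssim|Q|/\nrm{\chi_Q}_{X'}$, and the supremum over $g$ yields the claim. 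With this in hand, for $f\in\BMO_X^*$ and any optimal center $c$, the triangle inequality combined with $\ip{|f-c|}_Q\le(\nrm{\chi_Q}_X\nrm{\chi_Q}_{X'}/|Q|)\nrm{f-c}_{X_Q}\lesssim\nrm{f-c}_{X_Q}$ gives $\nrm{f-\ip{f}_Q}_{X_Q}\lesssim\nrm{f-c}_{X_Q}$, so $\BMO_X=\BMO_X^*$.

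For case \ref{itsufprop2}, the $\A{1/2}$-condition is immediate: if $E\subset Q$ with $|E|\ge|Q|/2$, then $Q\subset\{M\chi_E\ge 1/2\}$, so restricted weak type at $\lambda=1/4$ gives $\nrm{\chi_Q}_X\lesssim\nrm{\chi_E}_X$, and $\BMO_X^*\hookrightarrow\BMO$ follows from Theorem~\ref{jscor1}. For the sparse bound I write $\mc{F}=\bigcup_k\mc{F}_k$ with $\Omega_k=\bigcup_{P\in\mc{F}_k}P$ as in Subsection~\ref{subsec:sparse}. The sparseness condition yields $\Omega_k\subset\{M\chi_{\Omega_k\setminus\Omega_{k+1}}\ge\eta\}$, since for each $P\in\mc{F}_k$ one has $E_P\subset P\cap(\Omega_k\setminus\Omega_{k+1})$ with $\ip{\chi_{E_P}}_P\ge\eta$. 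Setting $a_k:=\nrm{\chi_{\Omega_k}}_X$ and $b_k:=\nrm{\chi_{\Omega_k\setminus\Omega_{k+1}}}_X$, the restricted weak type bound gives $a_k\le(C/\eta)b_k$, while the $q$-concavity of $X$ (assumed with constant $1$ after renorming) applied to the disjoint decomposition $\chi_{\Omega_k}=\chi_{\Omega_k\setminus\Omega_{k+1}}+\chi_{\Omega_{k+1}}$ gives $b_k^q+a_{k+1}^q\le a_k^q$. Combining these yields $a_{k+1}^q\le(1-(\eta/C)^q)\,a_k^q$, hence geometric decay $a_k\le\theta^{k/q}\nrm{\chi_Q}_X$ with $\theta<1$, and therefore $\nrm{\sum_k\chi_{\Omega_k}}_X\le\sum_k a_k\lesssim\nrm{\chi_Q}_X$, which is Theorem~\ref{mrfull}\ref{it:3}.

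The main obstacle is the case \ref{itsufprop2} sparse estimate, where the interplay between the $q$-concavity lower bound $b_k^q+a_{k+1}^q\le a_k^q$ and the restricted weak type upper bound $a_k\lesssim b_k$ must be arranged precisely to produce a contractive recursion between consecutive generations. The $A_\infty$-type identity $\nrm{\chi_Q}_X\nrm{\chi_Q}_{X'}\simeq|Q|$ used in case \ref{itsufprop1} is the only other nontrivial auxiliary, and it follows from a short duality argument as sketched.
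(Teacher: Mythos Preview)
Your argument is correct. For case \ref{itsufprop2} and for the sparse estimate in case \ref{itsufprop1}, your proof is essentially identical to the paper's: the same duality computation for the sparse bound, the same $\A{1/2}$ verification via $Q\subset\{M\chi_E\ge\tfrac12\}$, and the same contractive recursion $a_{k+1}^q\le(1-c)a_k^q$ obtained by combining $q$-concavity on the disjoint splitting $\chi_{\Omega_k}=\chi_{\Omega_k\setminus\Omega_{k+1}}+\chi_{\Omega_{k+1}}$ with the restricted weak type bound (minor remark: what you verify at the end is Theorem~\ref{mrfull}\ref{it:2} rather than \ref{it:3}, since $\sum_P\chi_P=\sum_k\chi_{\Omega_k}$).

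The one genuine difference is how you obtain $\BMO_X^*\hookrightarrow\BMO$ in case \ref{itsufprop1}. The paper verifies the $\A{1/2}$-condition directly by the same duality trick used for the sparse bound: for $|E|\ge\tfrac12|Q|$ one has $\int_Q|g|\le 2\int_E Mg\le 2\nrm{\chi_E}_X\nrm{Mg}_{X'}$, whence $\nrm{\chi_Q}_X\lesssim\nrm{\chi_E}_X$. You instead establish the estimate $\nrm{\chi_Q}_X\nrm{\chi_Q}_{X'}\lesssim|Q|$ and use it to show $X_Q\hookrightarrow L^1(Q,\ddn x/|Q|)$, which gives $\BMO_X=\BMO_X^*$ directly and then $\BMO_X^*\hookrightarrow\BMO$ via Corollary~\ref{necbmo}. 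Both routes are short; yours has the mild advantage of yielding the identity $\BMO_X=\BMO_X^*$ as an intermediate step rather than only as a consequence of $\BMO=\BMO_X=\BMO_X^*$, while the paper's route stays entirely within the $\A{1/2}$ framework of Theorem~\ref{jscor1} and avoids appealing to Corollary~\ref{necbmo}. Note that your $A_\infty$-type bound also immediately implies the $\A{1/2}$-condition (indeed $\A{\delta}$ for every $\delta$), so the two approaches are very close in content.
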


\begin{proof}
For \ref{itsufprop1} fix a cube $Q \in \mc{Q}$ and let ${\mathcal F}\subset {\mathcal D}(Q)$ be an $\frac12$-sparse family. Then, for any $g \in X'$, we have
\begin{align*}
  \int_{\R^n}\Big(\sum_{P\in {\mathcal F}}\chi_P\Big)\abs{g}=\sum_{P\in {\mathcal F}}\int_P\abs{g}&\leq 2 \sum_{P\in {\mathcal F}}\int_{E_P}Mg=2\int_QMg\\
  &\leq  2\,\|Mg\|_{X'}\|\chi_Q\|_{X}\lesssim \|g\|_{X'}\|\chi_Q\|_{X}.
\end{align*}
By \eqref{eq:LL}, this implies
$$\Big\|\sum_{P\in {\mathcal F}}\chi_P\Big\|_{X}\lesssim \|\chi_Q\|_{X},$$
so Theorem \ref{mrfull}\ref{it:2} holds and thus $\BMO \hookrightarrow\BMO_X$. Moreover, for any measurable $E\subset Q$ with $\abs{E} \geq \frac12 \abs{Q}$ we have by \eqref{eq:LL}
\begin{align*}
  \nrm{\chi_{Q}}_X = \sup_{\nrm{g}_{X'}\leq 1} \int_Q \abs{g} &\leq 2\, \sup_{\nrm{g}_{X'}\leq 1} \int_E Mg \\&\leq 2\, \nrm{\chi_E}_X \sup_{\nrm{g}_{X'}\leq 1} \nrm{Mg}_{X'} \lesssim \nrm{\chi_E}_X,
\end{align*}
so $\cbrace{X_Q}$ satisfies the $\A{1/2}$-condition, which yields $\BMO_X^* \hookrightarrow\BMO$ by Theorem \ref{jscor1}.

 For \ref{itsufprop2} again fix a cube $Q \in \mc{Q}$ and take $E\subset Q$ with $\abs{E} \geq \frac12\abs{Q}$. Then we have
  \begin{equation*}
  \nrm{\chi_Q}_X \leq \nrm{\chi_{\cbrace{M\chi_E\geq\frac12}}}_X \lesssim \nrm{\chi_E}_X,
  \end{equation*}
  so $X$ satisfies the $\A{1/2}$-condition. By Theorem \ref{jscor1}, we therefore have $\BMO_X^* \hookrightarrow \BMO$.
   It remains to show $\BMO \hookrightarrow \BMO_X$, for which we will check the condition in Theorem \ref{mrfull}\ref{it:2}. Fix a cube $Q \in \mc{Q}$ and let $\mc{F} \subset \mc{D}(Q)$ be a $\frac12$-sparse family of cubes. Write $\mc{F} = \bigcup_{k=0}^\infty \mc{F}_k$, where each ${\mathcal F}_k$ is a family of pairwise disjoint cubes, and for $\O_k=\cup_{P\in {\mathcal F}_k}P$ we have $\O_{k}\subset \O_{k-1}$. Fix $k \in \N \cup \cbrace{0}$. Since $M$ is restricted weak type bounded on $X$, there is a $C>0$ such that
    \begin{align*}
     \nrms{\sum_{P \in \mc{F}_k}\chi_P}_X &\leq \nrmb{\chi_{\cbrace{M(\sum_{P \in \mc{F}_k} \chi_{E_P}) \geq \frac12}}}_X
     \leq 2C \,\nrmb{\sum_{P \in \mc{F}_k}\chi_{E_P}}_X.
   \end{align*}
   Therefore, using the $q$-concavity of $X$, we have
   \begin{align*}
     \nrmb{\sum_{P \in \mc{F}_k}\chi_P}_X^q &\geq \nrmb{\sum_{P \in \mc{F}_k}\chi_{P\setminus E_P}}_X^q +\nrmb{\sum_{P \in \mc{F}_k}\chi_{E_P}}_X^q \\
     &\geq \nrmb{\sum_{P \in \mc{F}_{k+1}}\chi_{P}}_X^q + \frac{1}{(2C)^q}  \nrmb{\sum_{P \in \mc{F}_k}\chi_P}_X^q.
   \end{align*}
   Defining $\alpha := \ha{1-{(2C)^{-q}}}^{1/q}<1$, we have
   $$
   \nrmb{\sum_{P \in \mc{F}_{k+1}}\chi_{P}}_X \leq \alpha \, \nrmb{\sum_{P \in \mc{F}_k}\chi_P}_X,
   $$
   and thus
   $$
   \nrmb{\sum_{P \in \mc{F}_{k}}\chi_{P}}_X \leq \alpha^k \, \nrm{\chi_Q}_X.
   $$
  We conclude
   \begin{align*}
     \nrms{\sum_{P \in \mc{F}}\chi_P}_X &\lesssim \sum_{k=0}^\infty\nrmb{\sum_{P \in \mc{F}_k}\chi_P}_X\leq {\sum_{k=0}^\infty \alpha^{k}} \cdot \nrm{\chi_Q}_X
     \lesssim \nrm{\chi_Q}_X,
   \end{align*}
   which implies  $\BMO \hookrightarrow \BMO_X$ by Theorem \ref{mrfull}.
\end{proof}

\begin{remark}
Note that we did not need the full $q$-concavity estimate in the proof of Proposition \ref{prop:weakmax}\ref{itsufprop2}. Indeed, we only used
\begin{equation*}
  \nrm{\chi_{E\cup E'}}_X \geq \hab{\nrm{\chi_E}_X^q + \nrm{\chi_{E'}}_X^q}^{1/q}.
\end{equation*}
for measurable, disjoint $E,E'\subset \R^n$.
\end{remark}

\section{Examples}\label{sec:ex}
We will now turn to concrete families of normalized quasi-Banach function spaces to which our main results are applicable. On the one hand we will show that our results generalize various previously known results in the literature. On the other hand, we will provide examples that will, in particular, show the following:
\begin{itemize}
  \item The $\A{\delta}$-condition is not necessary for the embedding $\BMO_X^* \hookrightarrow \BMO$ for any $\delta \in (0,1)$, see Example \ref{ex}. In particular, the $\A{1/2}$-condition assumed in Theorem \ref{jscor1} is not necessary.
  \item There exist $X$ such that $\BMO_X^* \not\hookrightarrow \BMO$, see Example~\ref{ex:expweight}.
  \item The conditions in Proposition \ref{prop:weakmax} are not necessary for either $\BMO \hookrightarrow \BMO_{X}$ or $\BMO_{X}^*\hookrightarrow \BMO$, see Remark \ref{rem:A1}, Subsection \ref{subs:riBFS} and Example \ref{ex}.
\end{itemize}

\subsection{Weighted \texorpdfstring{$L^1$}{L1}-spaces} We start by considering the case
$$
\nrm{f}_{X_Q} := \frac{\nrm{f\chi_Q}_{L^1(w)}}{\nrm{\chi_Q}_{L^1(w)}} = \frac{1}{w(Q)} \int_Q \abs{f}w
$$
for a weight $w$, i.e. a locally integrable $w \colon \R^n \to (0,\infty)$. In this setting the condition in Theorem \ref{mrfull}\ref{it:3} asserts that for any cube $Q \in \mc{Q}$ and any $\frac12$-sparse collection of cubes $\mc{F}\subseteq \mc{D}(Q)$ we have
$$
\sum_{P\in \mc{F}} w(P) \lesssim w(Q).
$$
This condition is equivalent to $w \in A_\infty$, where we say that $w \in A_\infty$ if the Fuji--Wilson $A_\infty$-constant
$$
[w]_{A_\infty}:= \sup_{Q \in \mc{Q}}\frac{1}{w(Q)} \int_Q M(w\chi_Q)
$$
is finite.
\begin{lemma}\label{lemma:ainfty}
  For any weight $w$ we have
  \begin{equation*}
    [w]_{A_\infty} \simeq \sup_{Q \in \mc{Q}} \sup_{\mc{F} \subset \mc{D}(Q)} \frac{\sum_{P\in \mc{F}} w(P)}{w(Q)},
  \end{equation*}
  where the second supremum is taken over all $\frac12$-sparse collections $\mc{F}$.
\end{lemma}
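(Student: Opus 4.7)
The plan is to prove the two inequalities separately, with the easy direction coming from testing $M(w\chi_Q)$ against cubes in $\mc{F}$, and the reverse direction from a principal-cube (Calder\'on--Zygmund) stopping-time construction.

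For the direction $\sup_Q \sup_{\mc{F}} \frac{\sum_{P \in \mc{F}} w(P)}{w(Q)} \lesssim [w]_{A_\infty}$, fix $Q \in \mc{Q}$ and a $\frac12$-sparse family $\mc{F} \subset \mc{D}(Q)$. Since $P \in \mc{F}$ witnesses $M(w\chi_Q) \geq \langle w\rangle_P$ on $P$, and $|E_P| \geq |P|/2$, we have $\int_{E_P} M(w\chi_Q) \geq \tfrac12 w(P)$. Summing in $P$ and using that $\{E_P\}_{P \in \mc{F}}$ are pairwise disjoint subsets of $Q$ gives
$$
\sum_{P \in \mc{F}} w(P) \leq 2\int_Q M(w\chi_Q) \leq 2[w]_{A_\infty}\, w(Q).
$$

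For the reverse direction, fix $Q$ and build a $\frac12$-sparse family $\mc{F} \subset \mc{D}(Q)$ with $\int_Q M(w\chi_Q) \lesssim w(Q) + \sum_{P \in \mc{F}} w(P)$. First reduce to the local dyadic maximal $M_Q^d w(x) := \sup_{R \in \mc{D}(Q),\, R \ni x} \langle w\rangle_R$: cubes $R \ni x$ with $|R|\geq |Q|$ appearing in the supremum defining $M(w\chi_Q)(x)$ contribute at most $\langle w\rangle_Q$ pointwise, while smaller cubes are dominated by shifted dyadic maximal functions (Hyt\"onen's three-lattice theorem), each of which compares to $M_Q^d w$ up to a dimensional constant on $Q$. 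Next define principal cubes by $\mc{F}_0 := \{Q\}$, and recursively let $\mc{F}_{k+1}$ consist of the maximal strict dyadic subcubes $R$ of some $P \in \mc{F}_k$ with $\langle w\rangle_R > 2^{n+1}\langle w\rangle_P$. The doubling threshold forces the total measure of $\mc{F}_{k+1}$-children of $P$ to be strictly less than $|P|/2$, so $\mc{F} := \bigcup_k \mc{F}_k$ is $\frac12$-sparse. Moreover, $\{E_P\}_{P \in \mc{F}}$ partitions $Q$, and for $x \in E_P$ the stopping criterion (applied in each ancestor) pins $M_Q^d w(x) \leq 2^{n+1}\langle w\rangle_P$, so
$$
\int_Q M_Q^d w \leq 2^{n+1} \sum_{P \in \mc{F}} \langle w\rangle_P \,|E_P| \leq 2^{n+1} \sum_{P \in \mc{F}} w(P).
$$
Combined with the reduction, this yields $[w]_{A_\infty} \lesssim \sup_{Q,\mc{F}} \frac{\sum_P w(P)}{w(Q)}$.

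The main obstacle is the dyadic reduction: the Fuji--Wilson constant is phrased with the non-dyadic maximal $M$, whereas sparse families in the statement live inside $\mc{D}(Q)$, so one must carefully replace $M(w\chi_Q)$ by a dyadic maximal adapted to $\mc{D}(Q)$ up to a constant and an additive $w(Q)$. Once that reduction is in hand, the principal-cubes construction and the testing argument close the equivalence cleanly.
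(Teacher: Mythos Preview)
Your first direction is correct and matches the paper's argument exactly.

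For the reverse direction, the paper simply invokes \cite[Lemma 2.6]{Le17}, which gives a $\tfrac12$-sparse family $\mc{F}\subset\mc{D}(Q)$ with $M(w\chi_Q)\lesssim\sum_{P\in\mc{F}}\langle w\rangle_P\chi_P$ pointwise on $Q$; integrating then finishes. You instead attempt a self-contained proof via principal cubes, which is a reasonable route, and your stopping-time argument for the \emph{local dyadic} maximal function $M_Q^d$ is correct: the threshold $2^{n+1}$ indeed makes the family $\tfrac12$-sparse, and the bound $M_Q^d w(x)\le 2^{n+1}\langle w\rangle_P$ on $E_P$ follows from maximality.

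The gap is in your dyadic reduction. The assertion that each shifted dyadic maximal function ``compares to $M_Q^d w$ up to a dimensional constant on $Q$'' is false pointwise: take $Q=[0,1]$ and $w$ concentrated just to the right of $1/2$; then for $x$ just to the left of $1/2$ the only cube in $\mc{D}(Q)$ containing both $x$ and the mass is $Q$ itself, so $M_Q^d w(x)=\langle w\rangle_Q$, whereas $M(w\chi_Q)(x)$ (and hence some $M^{d,\alpha}(w\chi_Q)(x)$) is arbitrarily large. Thus the three-lattice theorem does not reduce you to $M_Q^d$ in the way claimed. The correct proof of the pointwise sparse bound for $M$ with a sparse family in $\mc{D}(Q)$ (which is exactly the content of the cited lemma) runs the stopping-time \emph{directly} for $M$: one selects the maximal dyadic subcubes $R\subset P$ for which $\langle w\rangle_{3R}$ (or, equivalently, for which $R$ meets the superlevel set $\{M(w\chi_P)>C_n\langle w\rangle_P\}$) exceeds the threshold, using the weak-$(1,1)$ bound for $M$ to control their total measure, and then one checks that $M(w\chi_Q)\lesssim\langle w\rangle_P$ on $E_P$ via a localisation argument. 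Either carry out that version, or simply cite the sparse domination of $M$ as the paper does.
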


\begin{proof}
  Fix a cube $Q\in \mc{Q}$. First suppose that $w \in A_\infty$ and take a $\frac{1}{2}$-sparse family of cubes $\mc{F} \subset \mc{D}(Q)$. Then we have
  \begin{align*}
    \sum_{P\in \mc{F}} w(P)\leq 2\sum_{P \in \mc{F}}\int_{E_P} M(w\chi_P)\leq 2 \int_{Q} M(w\chi_Q) \leq 2 \,[w]_{A_\infty} \,w(Q).
  \end{align*}
  Conversely, by \cite[Lemma 2.6]{Le17} we can find a $\frac12$-sparse collection of cubes $\mc{F} \subset \mc{D}(Q)$ such that
  $$
  M(w\chi_Q)(x) \lesssim \sum_{P \in \mc{F}} \ip{w}_P \chi_P(x), \qquad x \in Q.
  $$
  Therefore, we have
  $$
  \int_{Q} M(w\chi_Q) \lesssim \int_Q \sum_{P \in \mc{F}} \ip{w}_Q \chi_P = \sum_{P\in \mc{F}} w(P),
  $$
  finishing the proof.
\end{proof}

As a direct corollary of Lemma \ref{lemma:ainfty}, Theorem \ref{mrfull} recovers the recent characterization of the Fuji--Wilson $A_\infty$-constant in \cite[Corollary 2.1]{OPRR19}.
\begin{cor}\label{cor:Ainfty}
  For any weight $w$ we have
  \begin{equation*}
    [w]_{A_\infty}\simeq \sup_{\nrm{f}_{\BMO}\leq 1} \nrm{f}_{\BMO_{L^1(w)}}.
  \end{equation*}
\end{cor}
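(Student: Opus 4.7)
The plan is to read the corollary as the composition of two equivalences: Theorem~\ref{mrfull} relates the embedding constant $\sup_{\|f\|_{\BMO}\le 1}\|f\|_{\BMO_{L^1(w)}}$ to the sparse testing constant $C_2$ in Theorem~\ref{mrfull}\ref{it:2}, and Lemma~\ref{lemma:ainfty} relates a dual sparse quantity to $[w]_{A_\infty}$. So the only thing to verify is that, for the particular family $X_Q$ with $\|f\|_{X_Q}=\frac{1}{w(Q)}\int_Q|f|w$, the sparse testing constant $C_2$ equals (up to constants) the sparse supremum appearing in Lemma~\ref{lemma:ainfty}.

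First I would check that $\{X_Q\}$ is a family of normalized Banach function spaces in the sense of the paper: clearly $\|\chi_Q\|_{X_Q}=\frac{w(Q)}{w(Q)}=1$, so Theorem~\ref{mrfull} applies. The theorem then gives
\[
\sup_{\|f\|_{\BMO}\le 1}\|f\|_{\BMO_X} \;\simeq\; C_2 \;=\; \sup_{Q\in\mc{Q}}\sup_{\mc{F}}\Big\|\sum_{P\in\mc{F}}\chi_P\Big\|_{X_Q},
\]
where the inner supremum is over $\tfrac12$-sparse families $\mc{F}\subset \mc{D}(Q)$ (any fixed $\eta\in(0,1)$ works, and I will pick $\eta=\tfrac12$ to match Lemma~\ref{lemma:ainfty}).

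Next I would unwrap the $X_Q$-norm on the right: since $\sum_{P\in\mc{F}}\chi_P$ is a nonnegative function, Tonelli's theorem gives
\[
\Big\|\sum_{P\in\mc{F}}\chi_P\Big\|_{X_Q} \;=\; \frac{1}{w(Q)}\int_Q\sum_{P\in\mc{F}}\chi_P\,w \;=\; \frac{1}{w(Q)}\sum_{P\in\mc{F}}w(P).
\]
Taking suprema over $Q$ and over $\tfrac12$-sparse $\mc{F}\subset \mc{D}(Q)$, the right-hand side is exactly the quantity that Lemma~\ref{lemma:ainfty} identifies with $[w]_{A_\infty}$ up to a multiplicative constant. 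Chaining the two $\simeq$'s yields the corollary.

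No real obstacle is expected: Theorem~\ref{mrfull} and Lemma~\ref{lemma:ainfty} do all the work, and the only content of the proof is the trivial computation $\|\sum_P \chi_P\|_{X_Q}=w(Q)^{-1}\sum_P w(P)$ that links them. If anything is worth emphasizing, it is that the sparse characterization of Theorem~\ref{mrfull}\ref{it:2} was precisely designed so that in the weighted $L^1$ case it collapses onto the Fuji--Wilson-type testing quantity of Lemma~\ref{lemma:ainfty}, thereby recovering \cite[Corollary~2.1]{OPRR19} as a direct specialization of the general framework.
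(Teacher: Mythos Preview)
Your proposal is correct and follows exactly the approach the paper intends: the corollary is stated as a direct consequence of Lemma~\ref{lemma:ainfty} combined with Theorem~\ref{mrfull}, and your proof spells out precisely this combination, with the trivial computation $\bigl\|\sum_{P\in\mc{F}}\chi_P\bigr\|_{X_Q}=w(Q)^{-1}\sum_{P\in\mc{F}}w(P)$ serving as the link between the sparse constant $C_2$ of Theorem~\ref{mrfull}\ref{it:2} and the quantity in Lemma~\ref{lemma:ainfty}.
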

\begin{remark}
  Note that the argument in Lemma \ref{lemma:ainfty} could also be applied to $X_Q = \frac{\nrm{f\chi_Q}_{L^1(w)}}{Y(Q)}$ for any functional $Y \colon \mc{Q} \to (0,\infty)$, adapting the definition of $A_\infty$ accordingly. This would recover the main theorem of \cite{OPRR19}.
\end{remark}

In the setting of weighted $L^1$-spaces,  the $\A{\delta}$-condition for $\delta \in (0,1)$ asserts that for any $Q \in \mc{Q}$ and any measurable $E \subseteq Q$ with $\abs{E} \geq \delta \abs{Q}$ we have $$w(E) \gtrsim w(Q).$$ This is equivalent to $w \in A_\infty$ by \cite[Lemma 5]{CF74}. So $w \in A_\infty$ implies that $\BMO_X^* \hookrightarrow \BMO$ by Theorem \ref{jscor}, which combined with Corollary \ref{cor:Ainfty} yields the classical result of Muckenhoupt--Wheeden \cite{MW75} that $$\BMO = \BMO_{L^1(w)} = \BMO_{L^1(w)}^*, \qquad w \in A_\infty.$$
\begin{remark}\label{rem:A1}
  Note that the sufficient conditions in Proposition \ref{prop:weakmax} would yield much more stringent conditions on $w$. In particular, Proposition \ref{prop:weakmax}\ref{itsufprop2} with  $X = L^1(w)$ is equivalent to the assumption that $w \in A_1 \subsetneq A_\infty$.
\end{remark}

We end the discussion of our results in the context of weighted $L^1$-spaces with an example that shows, in particular, that the $A_\infty$-condition is not necessary for the embedding $\BMO_{L^1(w)} \hookrightarrow \BMO$ and that the embedding $\BMO_X^* \hookrightarrow \BMO$ can fail.
\begin{example}\label{ex:expweight}
  Take $n=1$ and define $w(x) = \ee^{x}$. Then we have
  \begin{enumerate}[(i)]
  \item \label{itweightedL1:1} $\BMO_{L^1(w)} \hookrightarrow \BMO$
  \item \label{itweightedL1:2} $\BMO_{L^1(w)}^* \not\hookrightarrow \BMO$
  \end{enumerate}
\end{example}

\begin{proof}
For \ref{itweightedL1:1} let $f\in \BMO_{L^1(w)}$ and fix $0<\lambda < \nrm{f}_{\BMO}$. Define the (non-empty) family of intervals  $$\mc{F}=\cbraces{I: \frac{1}{\abs{I}} \int_{I}\abs{f - \ip{f}_I}\ge \frac{3}{4}\lambda}.$$
Define $d=\inf_{I\in \mc{F}}|I|$. First suppose that $d \leq 2$ and take an interval $I \in \mc{A}$ with $\abs{I} \leq 3$. Then, noting that
\begin{equation}\label{e^x}
\sup_{y\in J} w(y)\simeq \inf_{y\in J} w(y), \qquad \abs{J} \leq 3,
\end{equation}
we find that
$$\frac34\lambda \leq \frac{1}{\abs{I}} \int_{I}\abs{f - \ip{f}_I} \lesssim \frac{1}{w(I)}\int_{I} |f-\ip{f}_I| w \leq \nrm{f}_{\BMO_{L^1(w)}}.X$$

Now, assume that $d\ge 2$. By definition there exists an interval $I=(a,b) \in \mc{F}$ with $h:=|I|\le d+\frac12$ such that   $$\frac{1}{\abs{I}} \int_{I}\abs{f - \ip{f}_I}\ge \frac{3}{4}\lambda.$$
Define $I_0= (a,b-1]$ and $I_1 = (b-1,b)$ and observe that $I = I_0 \cup I_1$.
Then we have
\begin{align*}
  \langle f\rangle_{I}&= \frac{h-1}{h}\langle f\rangle_{I_0} + \frac{1}{h}\langle f\rangle_{I_1} = \frac{h-1}{h}\langle f\rangle_{I_0} +  \frac1h\ip{ f}_{I} +\frac1h \int_{I_1} f-\langle f\rangle_{I},
\end{align*}
so,  rearranging the terms, we obtain
\begin{equation}\label{iden}
\langle f\rangle_{I}-\langle f\rangle_{I_0}=\frac{1}{h-1}\int_{I_1} |f-\langle f\rangle_{I}|
\end{equation}
We claim that  \begin{equation}\label{eq:gtr14}
  \int_{I_1} |f-\langle f\rangle_{I}|\geq \frac\lambda4.
\end{equation}
Indeed, suppose that this is not the case. Then, using \eqref{iden}, we have
\begin{align*}
  \frac{3}{4}\lambda &\leq \frac{1}{|I|}\int_{I} |f-\langle f\rangle_{I}|=\frac{1}{|I|}\int_{I_0} |f-\langle f\rangle_{I}|+\frac{1}{|I|}\int_{I_1} |f-\langle f\rangle_{I}|\\
  &\le \frac{1}{h}\int_{I_0} |f-\langle f\rangle_{I_0}|+\frac{h-1}{h} |\langle f\rangle_{I}-\langle f\rangle_{I_0}|+\frac{1}{h}\int_{I_1} |f-\langle f\rangle_{I}|\\
  &\le \frac{h-1}{h} \cdot \frac{1}{\abs{I_0}}\int_{I_0} |f-\langle f\rangle_{I_0}|+\frac{1}{h} \has{\frac\lambda4+\frac\lambda4},
\end{align*}
which implies
$$ \frac{1}{\abs{I_0}}\int_{I_0} |f-\langle f\rangle_{I_0}| \geq \frac{h}{h-1} \has{\frac{3}{4}\lambda-\frac{\lambda}{2h}} = \frac{3}{4}\lambda +\frac{\lambda}{4} \frac{1}{h-1} >\frac34\lambda.$$
We deduce that $I_0 \in \mc{F}$, which is a contradiction since $|I_0|\le d-\frac{1}{2}$. So \eqref{eq:gtr14} must hold.

Now, observe that $w(I)=e^{b}-e^{a} \le e^{b}$. Thus, combining \eqref{e^x} and \eqref{eq:gtr14}, we obtain
\begin{align*}
  \frac{\lambda}{4}&\le \int_{I_1} |f-\langle f\rangle_{I}|\lesssim \frac{\inf_{y\in I_1} w(y)}{e^{b}}  \int_{I^+} |f-\langle f\rangle_{I}|\\
  &\le \frac{1}{e^{b}}  \int_{I_1} |f-\langle f\rangle_{I}| w \le \frac{1}{w(I)}  \int_{I} |f-\langle f\rangle_{I}| w \leq \nrm{f}_{\BMO_{L^1(w)}}.
\end{align*}

COmbining the cases $d \leq 2$ and $d \geq 2$, we conclude $\nrm{f}_{\BMO_{L^1(w)}} \gtrsim \lambda$ for all $0<\lambda < \nrm{f}_{\BMO}$, which shows that $f \in \BMO$ and $\nrm{f}_{\BMO} \lesssim \nrm{f}_{\BMO_{L^1(w)}}$.

\bigskip

  For \ref{itweightedL1:2} define $f(x) = x$. It is clear that $f \notin \BMO$. Since $f$ is Lipschitz continuous, we have
  $$
  \sup_{\abs{I} \leq 1} \inf_c \frac{1}{w(I)} \int_I\abs{f-c}w <\infty.
  $$
  Let $I=(a,b)$ be an interval with $\abs{I} = b-a >1$. Then
\begin{align*}
  \int_{a}^{b} \absb{x-b} \ee^{x}\dd x &=  \int_{a}^{b} (b-x) \ee^{x}\dd x =\ee^{b} \int_{0}^{b-a} y \ee^{-y}\dd y \simeq \ee^{b} \simeq w(I),
\end{align*}
which implies $$\inf_c \frac{1}{w(I)}\int_I \absb{x-c} \ee^{x}\dd x\lesssim 1.$$ We conclude that   $f\in \BMO^{*}_{L^{1}(w)}$, finishing the proof.
\end{proof}

\subsection{Rearrangement invariant Banach function spaces}\label{subs:riBFS}
Next, we  study the case of rearrangement invariant quasi-Banach function spaces. In this situation $\A\delta$-condition holds for any $\delta \in (0,1)$, which follows by subdividing $Q$ into sets of equal measure and using equimeasurability. Therefore, as a direct corollary to Theorem \ref{jscor1}, we have:
\begin{cor}\label{ricase}
Let $X=\{X_Q\}$ be a family normalized rearrangement invariant quasi-Banach function spaces. Then $\BMO_X^* \hookrightarrow \BMO$.
\end{cor}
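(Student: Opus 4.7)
The plan is to deduce this immediately from Theorem \ref{jscor1} by verifying that every family of normalized rearrangement invariant quasi-Banach function spaces automatically satisfies the $\A{1/2}$-condition. The entire content of the corollary is this reduction, and the verification is the one hinted at in the paper: subdivide $Q$ into pieces of equal measure and exploit equimeasurability.

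More concretely, I would fix a cube $Q \in \mc{Q}$ and a measurable $E \subset Q$ with $|E| \geq \tfrac12|Q|$, and partition $Q$ into two disjoint pieces $F_1, F_2$ with $|F_1| = |F_2| = |Q|/2$. Since $|F_1| \leq |E|$, I can choose a measurable $F_1' \subset E$ with $|F_1'| = |F_1|$, so that $\chi_{F_1}$ and $\chi_{F_1'}$ are equimeasurable. Rearrangement invariance then gives $\|\chi_{F_1}\|_{X_Q} = \|\chi_{F_2}\|_{X_Q} = \|\chi_{F_1'}\|_{X_Q}$, and compatibility gives $\|\chi_{F_1'}\|_{X_Q} \leq \|\chi_E\|_{X_Q}$. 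Writing $\chi_Q = \chi_{F_1} + \chi_{F_2}$ and applying the quasi-triangle inequality once yields
$$1 \simeq \|\chi_Q\|_{X_Q} \leq K\bigl(\|\chi_{F_1}\|_{X_Q} + \|\chi_{F_2}\|_{X_Q}\bigr) = 2K\,\|\chi_{F_1}\|_{X_Q} \leq 2K\,\|\chi_E\|_{X_Q},$$
where $K$ is the quasi-norm constant of $X_Q$. This is the $\A{1/2}$-condition, and Theorem \ref{jscor1} then concludes the argument.

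There is essentially no obstacle: the only subtlety is tracking the quasi-triangle inequality's constant, and with only two summands this costs just a factor of $K$. The same argument with $N = \lceil 1/\delta \rceil$ equal pieces in place of two actually gives the stronger $\A{\delta}$-condition for every $\delta \in (0,1)$, at the cost of a constant $C_N$ obtained from iterating the quasi-triangle inequality $N-1$ times; since Theorem \ref{jscor1} only needs $\A{1/2}$, this strengthening is not required for the corollary.
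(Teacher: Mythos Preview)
Your proposal is correct and matches the paper's approach exactly: the paper simply states that the $\A{\delta}$-condition holds for every $\delta\in(0,1)$ by subdividing $Q$ into sets of equal measure and using equimeasurability, then invokes Theorem~\ref{jscor1}. You have spelled out precisely this argument for $\delta=1/2$, with the correct handling of the quasi-triangle inequality.
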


For the converse embedding, i.e. $\BMO \hookrightarrow \BMO_X$, we note that the function $\psi_X$ introduced in Section \ref{sec:suf} simplifies significantly when
$X$ is a rearrangement invariant Banach function space and $\{X_Q\}$ for $Q \in \mc{Q}$ is given by
\begin{equation*}
\|f\|_{X_Q}:=\frac{\|f\chi_Q\|_{X}}{\|\chi_Q\|_X}.
\end{equation*}
Indeed, for $t >0 $ and $E\subset \R^n$ with $\abs{E}=t$ define
\begin{equation*}
\varphi_X(t):= \nrm{\chi_E}_X,
\end{equation*}
which by equimeasurability is independent of the chosen $E$. The function $\varphi_X$ so-defined is called the \emph{fundamental function} of $X$ (see e.g. \cite[Section 2.5]{BS88}). The function $\psi_X$ is the smallest submultiplicative majorant of the fundamental function of $X$ for $t \in (0,1)$, i.e.
\begin{equation*}
\psi_X(t)= \sup_{s >0} \frac{\varphi_X(st)}{\varphi_X(s)}, \qquad t \in (0,1).
\end{equation*}
In \cite{Sh70} it was shown that the boundedness of the maximal operator $M$ on either $X$ or $X'$ can not be characterized in terms of the fundamental function of $X$. In particular, the author constructs a rearrangement invariant Banach function space $X$  such that
$\varphi_X(t) = t^{1/2}$
and $M$ is unbounded on either $X$ or $X'$.
For these spaces we have
\begin{equation*}
\int_0^1\psi_X(t)\frac{dt}{t} = 2<\infty,
\end{equation*}
so  Proposition \ref{upb} and Corollary \ref{ricase} yield $\BMO = \BMO_X = \BMO_X^*$. This shows that the assumption in Proposition \ref{prop:weakmax}\ref{itsufprop1}, and therefore the assumption in the main results of \cite{H09, H12, I16, INS19}, is not necessary.

\subsection{Orlicz spaces}
We now turn to normalized Orlicz spaces, i.e. we will study $\BMO_{\f}$, which is defined using
$$
\nrm{f}_{\f,Q}:=  \inf \cbraces{\alpha>0 : \frac{1}{\abs{Q}} \int_Q \varphi\has{\frac{\abs{f}}{\a}}\dd x \leq 1}.
$$
In the recent paper \cite{CPR20}, the authors considered
a question about minimal assumptions on $\f$ for which $\BMO_{\f}\hookrightarrow \BMO$, see also \cite{LSSVZ15} for closely related results.
The main result in \cite{CPR20} was established in two stages:
\begin{enumerate}[(i)]
\item\label{itOrlicz:1} First, the embedding $\BMO_{\f}^*\hookrightarrow \BMO$ was shown assuming that $\f$ is increasing, concave, $\f(0)=0$ and $\lim_{t\to \infty}\f(t)=\infty$.
\item\label{itOrlicz:2}  Second, using the result in \ref{itOrlicz:1}, the monotonicity and concavity assumptions were relaxed to $\f$ being measurable.
\end{enumerate}
We note that, under less restrictive assumptions on $\f$, \ref{itOrlicz:1}  follows from Remark \ref{rem3}. Indeed, if $\f$ is non-decreasing, then the norm $\|\cdot\|_{\f,Q}$ is compatible. Further, if $\lim_{t\to 0}\f(t)=\f(0)=0$ and $\lim_{t\to \infty}\f(t)=\infty$,
then we have that
$$\|\chi_Q\|_{\f,Q}=\inf\{\la>0:\f(1/\la)\le 1\}<\infty,$$
and, for $E\subset Q$ with $|E|=|Q|/2$, we have
$$\|\chi_E\|_{\f,Q}=\inf\{\la>0:\f(1/\la)\le 2\}>0.$$
Thus, the $\A{1/2}$-condition holds, which by Remark \ref{rem3} implies that $\BMO_{\f}^*\hookrightarrow \BMO$.

Notice that we did not use the concavity assumption to deduce \ref{itOrlicz:1}. For this reason, \ref{itOrlicz:2} also follows at once. Indeed, assuming that $\psi$ is measurable,
$\psi(0)=0$ and $\lim_{t\to \infty}\psi(t)=\infty$, define
$$\f(t):=\inf_{x\in [t,\infty)}\min\cbrace{\psi(x), x}.$$
We obtain that $\f$ is non-decreasing, $\lim_{t\to 0}\f(t)=\f(0)=0$ and $\lim_{t\to \infty}\f(t)=\infty$. Moreover $\f\le \psi$. Therefore,
$$\|f\|_{\BMO}\lesssim \|f\|_{\BMO_{\f}^*}\le \|f\|_{\BMO_{\psi}^*}.$$

\bigskip

In
 a recent paper \cite{MRR20}, the embedding $\BMO \hookrightarrow \BMO_X$
has been obtained in different terms, which is difficult to compare with e.g. Theorem~\ref{mrfull} or Proposition \ref{upb} in general. Observe, however, that in some particular cases Proposition \ref{upb} provides a better result. For example, it was shown in \cite[Example 4.2]{MRR20} that
for $\f_{p,\a}(t):=t^p(1+\log^+(t))^{\a}$, one has
$$\|f\|_{\BMO_{\varphi_{p,\alpha}}}\lesssim 2^{\a}(p+\a+1)\|f\|_{\BMO}\qquad p\ge 1,\,\a>0.$$
Using Proposition \ref{upb}, we can show a more precise estimate:
\begin{example}
  Define $\f_{p,\a}(t):=t^p(1+\log^+(t))^{\a}$ for $p \in [1,\infty)$ and $\a>0$. Then we have $\BMO \hookrightarrow \BMO_{\varphi_{p,\alpha}}$ with
  \begin{equation*}
\|f\|_{\BMO_{\varphi_{p,\alpha}}}\lesssim (p+\a)\|f\|_{\BMO}.
\end{equation*}
\end{example}

\begin{proof}
We have
$$\Psi_{p,\a}(t):=\sup_Q\sup_{E\subset Q:|E|\le t|Q|}\|\chi_E\|_{\f_{p,\a},Q}=\frac{1}{\f_{p,\a}^{-1}(1/t)},$$
and hence
\begin{align*}
\int_0^1\Psi_{p,\a}(t)\frac{\ddn t}{t}&=\int_0^1\frac{1}{\f_{p,\a}^{-1}(1/t)}\frac{\ddn t}{t}=\int_1^{\infty}\frac{1}{\f_{p,\a}^{-1}(t)}\frac{\ddn t}{t}\\
&=\int_{1}^{\infty}\frac{\f_{p,\a}'(t)}{\f_{p,\a}(t)}\frac{\ddn t}{t}=\int_1^{\infty}\Big(p+\frac{\a}{1+\log t}\Big)\frac{dt}{t^2}\simeq p+\a.
\end{align*}
The result therefore follows from Proposition \ref{upb}.
\end{proof}

\subsection{Variable exponent \texorpdfstring{$L^p$}{Lp}-spaces}\label{vlp}
Let $p:{\mathbb R}^n\to [1,\infty)$ be a measurable function. Denote
by $L^{p(\cdot)}$ the space of functions $f\colon \R^n \to \R$ such
that
$$\|f\|_{L^{p(\cdot)}}:=\inf\left\{\la>0:\int_{{\mathbb
R}^n}|f(x)/\la|^{p(x)}dx\le 1\right\}<\infty.$$
Denote $p_-:= \essinf_{x\in\mathbb{R}^n} p(x)$ and $p_+:=
\esssup_{x\in\mathbb{R}^n} p(x).$ In this final example section we will study $\BMO_{L^{p(\cdot)}}$, which is defined using
$$
\nrm{f}_{L^{p(\cdot)}(Q)} := \frac{\|f\chi_Q\|_{L^{p(\cdot)}}}{\|\chi_Q\|_{L^{p(\cdot)}}}.
$$
\begin{remark}
  Note that one could also define $\BMO_{L^{p(\cdot)}(\frac{\ddn x}{\abs{Q}})}$ using
  $$
\nrm{f}_{L^{p(\cdot)}(\frac{\ddn x}{\abs{Q}})} := \inf\left\{\la>0:\frac{1}{\abs{Q}}\int_{Q}|f(x)/\la|^{p(x)}dx\le 1\right\}.
$$
However, in this case one trivially has $$\BMO_{L^{p_+}} \hookrightarrow \BMO_{L^{p(\cdot)}(\frac{\ddn x}{\abs{Q}})}\hookrightarrow \BMO_{L^{p_-}}$$ and thus $\BMO_{L^{p(\cdot)}(\frac{\ddn x}{\abs{Q}})} = \BMO$ for any $p(\cdot)$ with $p_+<\infty$.
\end{remark}

It was shown in \cite{IST14} that if $p_+<\infty$ and the maximal operator is of weak type on $L^{p(\cdot)}$, i.e.
\begin{equation*}
\sup_{\lambda>0}\lambda\,\|\chi_{\{Mf>\lambda\}}\|_{L^{p(\cdot)}}\lesssim \|f\|_{L^{p(\cdot)}},
\end{equation*}
then $\BMO_{L^{p(\cdot)}} = \BMO$. Since $L^{p(\cdot)}$ is $p_+$-concave, this result is a special case of Proposition \ref{prop:weakmax}.

The main result of this subsection is the construction of a variable exponent $L^p$-space that shows that the $\A{\delta}$-condition is not necessary for any of the following embeddings:
\begin{itemize}
\item $\BMO \hookrightarrow \BMO_{X}$
\item $\BMO_{X}^*\hookrightarrow \BMO$
\item $\BMO_{X} =\BMO_X^*$
\end{itemize}
In particular, this implies that the neither the (restricted) weak type boundedness of $M$ on $X$ nor the boundedness of $M$ on $X'$ is necessary for either embedding, since these assumptions imply the $\A{\delta}$-condition for all $\delta \in (0,1)$ (see the proof of Proposition \ref{prop:weakmax}). Moreover, the example shows that the $\A{\delta}$-condition does not imply the $\A{\varepsilon}$-condition for $\varepsilon<\delta$.

\begin{example}\label{ex}
Given $0<\e<\d<1$, there exists an exponent $p\colon \R \to [1,\infty)$ with $p_+<\infty$ such that
\begin{enumerate}[(i)]
\item\label{exit1} $L^{p(\cdot)} \in \A{\delta} \setminus \A{\varepsilon}$.
\item\label{exit2} $\psi_{L^{p(\cdot)}}(t)\lesssim t^{1/2}$.
\item\label{exit3} $\BMO_{L^{p(\cdot)}}^*\hookrightarrow \BMO$.
\end{enumerate}
In particular, $\BMO = \BMO_{L^{p(\cdot)}}=\BMO_{L^{p(\cdot)}}^*$.
\end{example}

\begin{proof}
Our example will be a modification of examples considered in \cite[Ex. 4.51]{CUF13} and \cite[Th. 5.3.4]{DHHR11}. Take $\rho>1$ such that $\rho\e<\d$.
Let $\f$ be a $C^{\infty}$-function supported in $[-\rho\e/2,\rho\e/2]$, $0\le \f\le 1$ and $\f=1$ on $[-\e/2,\e/2]$. Define
$$p(x):=1+\sum_{k=1}^{\infty}\f(x-k).$$

Let us start by showing that $L^{p(\cdot)}\in \A{\d}$. Let $I$ be an arbitrary interval and let $E\subset I$ with $|E|=\d|I|$. Our goal is to show that
$$
\|\chi_I\|_{L^{p(\cdot)}}\lesssim \|\chi_E\|_{L^{p(\cdot)}},
$$
which is equivalent to
\begin{equation}\label{eqform}
\int_I\left(\frac{1}{\|\chi_E\|_{L^{p(\cdot)}}}\right)^{p(x)}dx\lesssim 1.
\end{equation}

Suppose first that $|I|\le N$, where $N\ge 1$ will be chosen later on.
Assume that $|E|\le 1$. Arguing as above, since $|E|^{1/p_-(E)}\le \|\chi_E\|_{L^{p(\cdot)}}$ and
$p$ is uniformly Lipschitz continuous, for every $x\in I$ we have
\begin{align*}
\left(\frac{1}{\|\chi_E\|_{L^{p(\cdot)}}}\right)^{p(x)}\le \left(\frac{1}{|E|}\right)^{\frac{p(x)-p_-(E)}{p_-(E)}}\frac{1}{|E|}\le \left(\frac{1}{\d|I|}\right)^{\frac{c|I|}{p_-(E)}}\frac{1}{\d|I|}
\lesssim \frac{1}{|I|},
\end{align*}
where the implicit constant depends only on $\d$ and $N$. Integrating this over $I$ yields (\ref{eqform}).
If $|E|>1$, then $\|\chi_E\|_{L^{p(\cdot)}}\ge 1$ and hence (\ref{eqform}) obviously holds since $|I|\lesssim 1$.

Assume now that $|I|>N$. Observe that $\|\chi_I\|_{L^{p(\cdot)}}\simeq |I|$.
Now the idea is to show that for $N$ large enough, the set $E$ will necessarily have its portion of measure at least $\d'|I|$ on the set where $p=1$. Then we obtain that $\|\chi_E\|_{L^{p(\cdot)}}\simeq |I|$,
and so (\ref{eqform}) holds.

The worst situation is when $I$ is the minimal interval containing the supports of $N+1$ copies of $\f$. Then $|I|=N+\rho\e$. If $A=\{x\in I:p(x)=1\}$, then $|A|=N(1-\rho\e)$ and $|I\setminus A|=(N+1)\rho\e$.
Therefore, if $N$ is such that
$$\frac{(N+1)\rho\e}{N+\rho\e}<\d,$$
we obtain that $|E|/|I|\ge \d'>0$, and this completes the proof of (\ref{eqform}).

\bigskip

Next, let us show that $L^{p(\cdot)}\not\in \A{\e}$. For $m\in {\mathbb N}$ take $I=[\e/2,m+\e/2]$ and set
$$E=\cup_{k=1}^m[k-\e/2,k+\e/2].$$
Then $E\subset I$ and $|E|=\e|I|$. Since $p=2$ on $E$, we have
$$\|\chi_E\|_{L^{p(\cdot)}}=(\e m)^{1/2}.$$
Next, set $F=\cup_{k=0}^{m-1}[k+\rho\e/2,k+1-\rho\e/2].$ Then $F\subset I$, and since $p=1$ on~$F$, we have
$$(1-\rho\e)m=\|\chi_{F}\|_{L^{p(\cdot)}}\le \|\chi_{I}\|_{L^{p(\cdot)}}.$$
Therefore, the estimate $\|\chi_I\|_{L^{p(\cdot)}}\lesssim \|\chi_E\|_{L^{p(\cdot)}}$ does not hold, which proves that $L^{p(\cdot)}\not\in \A{\e}$.

\bigskip

We now turn to the proof of property \ref{exit2}. Suppose first that $|I|\le 1$. Set $p_-(I)=\essinf_Ip$. Since $|I|^{1/p_-(I)}\le \|\chi_I\|_{L^{p(\cdot)}}$ and
$p$ is uniformly Lipschitz continuous, for every $x\in I$,
\begin{align*}
\left(\frac{1}{\|\chi_I\|_{L^{p(\cdot)}}}\right)^{p(x)}\le \left(\frac{1}{|I|}\right)^{\frac{p(x)-p_-(I)}{p_-(I)}}\frac{1}{|I|}\le \left(\frac{1}{|I|}\right)^{\frac{c|I|}{p_-(I)}}\frac{1}{|I|}
\lesssim \frac{1}{|I|}.
\end{align*}
From this, for any subset $E\subset I$ with $|E|=t|I|$,
$$
\int_E \left(\frac{1}{t^{1/2}\|\chi_I\|_{L^{p(\cdot)}}}\right)^{p(x)}dx\le \frac{1}{t}\int_E \left(\frac{1}{\|\chi_I\|_{L^{p(\cdot)}}}\right)^{p(x)}dx\lesssim 1,
$$
which is equivalent to
$$\|\chi_E\|_{L^{p(\cdot)}}\lesssim t^{1/2}\|\chi_I\|_{L^{p(\cdot)}}.$$

Assume now that $|I|>1$. Let $E\subset I$ with $|E|=t|I|$. Suppose that $|E|\le 1$. Then
$$\|\chi_{E}\|_{L^{p(\cdot)}}\le |E|^{1/2}=t^{1/2}|I|^{1/2}\le t^{1/2}\|\chi_{I}\|_{L^{p(\cdot)}}.$$

It remains to consider the case when $|I|>1$ and $|E|>1$. Denote $A=\{x:p(x)=1\}$ and observe that
for every interval $I$ with $|I|>1$ we have $|I\cap A|\ge \frac{1-\rho\e}{1+\rho\e}|I|$ (the worst situation is when $I$ contains the supports of two adjacent copies of $\f$,
and in this case $|I|=1+\rho\e$ and $|I\cap A|=1-\rho\e$). Then
\begin{align*}
  \|\chi_{E}\|_{L^{p(\cdot)}}\le |E|\le \frac{1+\rho\e}{1-\rho\e}t|I\cap A|&=\frac{1+\rho\e}{1-\rho\e}t\|\chi_{I\cap A}\|_{L^{p(\cdot)}}\lesssim t\,\|\chi_I\|_{L^{p(\cdot)}}.
\end{align*}
This completes the proof of \ref{exit2}.

\bigskip

We finish the proof of the example by showing $\BMO_{L^{p(\cdot)}}^*\hookrightarrow \BMO$. By the John--Stromberg theorem, it suffices to show that for every interval $I$,
\begin{equation}\label{itsuf}
\inf_{c}((f-c)\chi_I)^*(|I|/2)\lesssim \|f\|_{\BMO_{L^{p(\cdot)}}^*}.
\end{equation}

First, since $p$ is uniformly Lipschitz continuous and $p_+<\infty$, by \cite[Cor. 3.18]{CUF13} we have that the weak type boundedness of $M$ on $L^{p(\cdot)}$ holds locally, i.e.
\begin{equation*}
\sup_{\lambda>0}\lambda\,\|\chi_{\{Mf>\lambda\}}\|_{L^{p(\cdot)}(I)}\lesssim \|f\|_{L^{p(\cdot)}(I)},
\end{equation*}
for every interval $I$ with $|I|\le r$ for $r>0$ In particular this means that
\begin{equation}\label{locpart}
\langle |f|\rangle_I\|\chi_I\|_{L^{p(\cdot)}}\lesssim \|f\chi_I\|_{L^{p(\cdot)}},\qquad|I|\le 7.
\end{equation}
From this, by Chebyshev's inequality, we obtain that (\ref{itsuf}) holds for every interval $I$ with $|I|\le 7$.

Suppose now that $|I|>7$. Fix an arbitrary constant $c$, and denote $g=|f-c|$. Consider the set
$$E=\{x\in I: g(x)\ge (g\chi_I)^*(|I|/2)\},$$
for which we have $|E|\ge |I|/2$.

Take $m\in {\mathbb N}$ such that $m\le |I|<m+1$. Let $I_1,\ldots,I_m$  be arbitrary pairwise disjoint intervals of length 1 contained in $I$.
Let $m_0$ be the number of intervals $I_k$ such that $|I_k\cap E|\ge \frac14$. Denote these intervals by $I_{k_1},\ldots,I_{k_{m_0}}$.
For the other $m-m_0$ intervals $I_k$ we have $|I_k\cap E|<\frac14$. Denote by $G$ the union of these intervals. Then $|G|=m-m_0$ and
$$\frac{m}{2}\le \frac{ |I|}{2}\le |E|\le |G\cap E|+|I|-|G|\le \frac{m-m_0}{4}+m_0+1,$$
from which be deduce $m_0\ge \frac{m-4}{3}$.

Let us now introduce the set
$$E'=\{x\in I: g(x)\ge \frac{1}{8}(g\chi_I)^*(|I|/2)\}$$
and consider the following two cases.

{\bf Case 1:} For every $i=1,\dots, m_0$ we have $|I_{k_i}\cap E'|\ge \frac{1+\delta}{2}$. Denote $A=\{x:p(x)=1\}$. Observe that for every interval $J$ with $|J|=1$ we have $|J\cap A|\ge 1-\delta$.
Therefore, for $i=1,\dots, m_0$, we have
$$
|I_{k_i}\cap E'\cap A|\geq \abs{I_{k_i} \cap E'}+\abs{I_{k_i} \cap A}-\abs{I_k}\geq \frac{1-\delta}{2},
$$
which implies, since $m \geq 7$, that
\begin{align*}
  |E'\cap A|\ge \sum_{i=1}^{m_0}|I_{k_i}\cap E'\cap A|\ge m_0\frac{1-\delta}{2}&\ge (m-4) \frac{1-\delta}{6}\\&\ge (m+1) \frac{1-\delta}{18}\ge|I| \cdot \frac{1-\delta}{18}
\end{align*}
Combining this with the fact that $\|\chi_I\|_{L^{p(\cdot)}}\simeq |I|$ yields
$$\|\chi_I\|_{L^{p(\cdot)}}\lesssim |E'\cap A| = \|\chi_{E'\cap A}\|_{L^{p(\cdot)}} \leq \|\chi_{E'}\|_{L^{p(\cdot)}}.$$
By the definition of $E'$, it follows from this that
$$
(g\chi_I)^*(|I|/2)\leq 8 \, \frac{\|g\chi_{E'}\|_{L^{p(\cdot)}}}{\|\chi_{E'}\|_{L^{p(\cdot)}}} \lesssim \frac{\|g\chi_I\|_{L^{p(\cdot)}}}{\|\chi_I\|_{L^{p(\cdot)}}}.
$$

{\bf Case 2:} There is an $i \in \cbrace{1,\ldots,m_0}$ such that $|I_{k_{i}}\cap(I\setminus E')|>\frac{1-\delta}{2}.$
Denote $J=I_{k_{i}}$ and $J'=I_{k_{i}}\cap(I\setminus E')$. Since $|J\cap E|\ge \frac{1}{4}$, we have by the definition of $E$
$$\langle g\rangle_J  \geq \frac{1}{4}\ip{g}_{J\cap E} \geq \frac{1}{4}\,(g\chi_I)^*(|I|/2)$$
and, by the definition of $E'$, we have
$$g(x)<\frac{1}{8}\,(g\chi_I)^*(|I|/2),\qquad x \in J'.$$
Combined, these estimates yield
\begin{equation}\label{oscg}
|g(x)-\langle g\rangle_J|\ge \frac{1}{8}\,(g\chi_I)^*(|I|/2),\quad x\in J'.
\end{equation}
Further, observe that for every measurable set $G$ with $|G|\simeq 1$ we have $\|\chi_G\|_{L^{p(\cdot)}}\simeq 1$. Therefore,
$$\|\chi_J\|_{L^{p(\cdot)}}\lesssim \|\chi_{J'}\|_{L^{p(\cdot)}},$$
which, along with (\ref{oscg}), yields
\begin{equation}\label{gchiI}
(g\chi_I)^*(|I|/2)\lesssim \frac{\|(g-\langle g\rangle_J)\chi_J\|_{L^{p(\cdot)}}}{\|\chi_J\|_{L^{p(\cdot)}}}.
\end{equation}

By \ref{exit2} and Proposition \ref{upb}, we know that $\BMO\hookrightarrow\BMO_{L^{p(\cdot)}}$. Therefore, combining Proposition \ref{spes} with Theorem \ref{mrfull}\ref{it:2}, we
obtain
$$\frac{\|(g-\langle g\rangle_J)\chi_J\|_{L^{p(\cdot)}}}{\|\chi_J\|_{L^{p(\cdot)}}}\lesssim \sup_{|P|\le 1}\frac{1}{|P|}\int_P|g-\langle g\rangle_P|.$$
Combining this with  the previous estimate, standard properties of mean oscillations and \eqref{locpart}, yields
\begin{align*}
  ((f-c)\chi_I)^*(|I|/2) &\lesssim \sup_{|P|\le 1}\frac{1}{|P|}\int_P|g-\langle g\rangle_P| \\&\lesssim  \sup_{|P|\le 1}\frac{1}{|P|}\int_P|f-\ip{f}_P|\\
  &\lesssim  \sup_{|P|\le 1} \inf_{c'}\frac{1}{|P|}\int_P|f-c'| \lesssim \nrm{f}_{\BMO^*_{L^{p(\cdot)}}}.
\end{align*}

Combining the two cases, we obtain
$$((f-c)\chi_I)^*(|I|/2)\lesssim\frac{\|(f-c)\chi_I\|_{L^{p(\cdot)}}}{\|\chi_I\|_{L^{p(\cdot)}}} +\nrm{f}_{\BMO^*_{L^{p(\cdot)}}}.$$
This implies (\ref{itsuf}) and therefore completes the proof.
\end{proof}

\section{Open questions}\label{sec:open}
In this final section we collect several open questions.

\begin{que}\label{ques1}
Is there a family normalized quasi-Banach function spaces $X=\{X_Q\}$ for which the embedding $\BMO_X\hookrightarrow \BMO$ fails?
\end{que}

Observe that by Proposition \ref{prop:charembed} the absolute value mapping cannot be bounded on $\BMO_X$ and by Corollary \ref{ricase} such an $X$ cannot be rearrangement-invariant. Moreover, for every concrete, non-rearrangement-invariant family $X=\{X_Q\}$
considered in this paper, the embedding $\BMO_X\hookrightarrow \BMO$ holds.

\bigskip

If the answer to Question \ref{ques1} is positive, i.e. the embedding $\BMO_X\hookrightarrow \BMO$  is nontrivial, we can ask the following question:

\begin{que}\label{ques2}
Does the embedding $\BMO\hookrightarrow \BMO_X$ imply the converse embedding $\BMO_X\hookrightarrow \BMO$?
\end{que}

By Theorem \ref{mrfull}, the embedding $\BMO\hookrightarrow \BMO_X$ provides a lot of information about $X=\{X_Q\}$, and the question is whether this information enough
to establish that $\BMO_X\hookrightarrow \BMO$. One may also consider a stronger version of this question, asking whether the embedding $\BMO\hookrightarrow \BMO_X$ implies $\BMO_X^*\hookrightarrow \BMO$.

\bigskip

In Example \ref{ex:expweight} we established that the embedding $\BMO_X^*\hookrightarrow \BMO$ can fail. It is therefore natural to ask the following question:

\begin{que}\label{ques3}
What are non-trivial necessary conditions for the embedding $\BMO_X^*\hookrightarrow \BMO$?
\end{que}

Theorem \ref{jscor} provides sufficient conditions for $\BMO_X^*\hookrightarrow \BMO$, and of course it is desirable to find a full characterization. However, curiously enough, we were not able to
find any non-trivial necessary condition. In particular, it is natural to guess that the embedding $\BMO_X^*\hookrightarrow \BMO$ should imply the doubling condition for $X$, i.e. $\|\chi_{2Q}\|_X\lesssim \|\chi_Q\|_X$ for any $Q \in \mc{Q}$.

\bigskip

In Corollary \ref{necbmo} we saw that the identity $\BMO_X=\BMO_X^*$ implies the embedding $\BMO_X^* \hookrightarrow \BMO$. A sufficient condition for the identity $\BMO_X=\BMO_X^*$ was given in \cite[Theorem E]{IST14}. This sufficient condition implies the $\A{\delta}$-condition for all $\delta \in (0,1)$ and is thus not necessary by Example \ref{ex}. Therefore, one may wonder if there are weaker sufficient conditions for the identity $\BMO_X=\BMO_X^*$.

\begin{que}\label{eqsp}
Can we characterize when the spaces $\BMO_X$ and $\BMO_X^*$ coincide?
\end{que}

\bibliographystyle{plain}
\bibliography{bmo}

\end{document}